\newtheorem{thm}{Theorem}[section]
\newtheorem{cor}[thm]{Corollary}
\newtheorem{claim}[thm]{Claim}
\newtheorem{fact}[thm]{Fact}
\newtheorem{lemma}[thm]{Lemma}
\newtheorem{prop}[thm]{Proposition}
\theoremstyle{definition}
\newtheorem{definition}[thm]{Definition}
\newtheorem{remark}[thm]{Remark}
\newtheorem{question}[thm]{Question}
\newtheorem{conj}[thm]{Conjecture}
\def\rquotient#1#2{%
	\makeatletter
	\raise.3ex\hbox{$#1$}/\lower.3ex\hbox{$#2$}%
	\makeatother
}	
\newcommand{\subjclass}[2][2010]{%
	\let\@oldtitle\@title%
	\gdef\@title{\@oldtitle\footnotetext{#1 \emph{Mathematics subject classification.} #2}}%
}
\newcommand{\keywords}[1]{%
	\let\@@oldtitle\@title%
	\gdef\@title{\@@oldtitle\footnotetext{\emph{Key words and phrases.} #1.}}%
}
\newcommand{\Address}{{
		\bigskip
		\small
		
		\textsc{Institut Montpellierain Alexander Grothendieck, 499-554 Rue du Truel, 34090 Montpellier, France.}\par\nopagebreak
		\textit{E-mail address}: \texttt{anthony.genevois@umontpellier.fr}
\medskip

		\textsc{Institut de Math\'ematiques de Jussieu-Paris Rive Gauche, Place Aur\'elie Nemours, 75013 Paris, France.}\par\nopagebreak
		\textit{E-mail address}: \texttt{romain.tessera@imj-prg.fr}
\medskip
		
}}
\title{Measure-scaling quasi-isometries}
\date{\today}
\author{Anthony Genevois and Romain Tessera}
\subjclass{Primary 20F65. Secondary 20F69.}
\keywords{Wreath products, lamplighter groups, quasi-isometric classification}
\begin{document}

\maketitle

\begin{abstract} A measure-scaling quasi-isometry between two connected graphs is a quasi-isometry that is quasi-$\kappa$-to-one in a natural sense for some $\kappa>0$. For non-amenable graphs, all quasi-isometries  are quasi-$\kappa$-to-one for any $\kappa>0$, while for amenable ones there exists at most one possible such $\kappa$. For an amenable graph $X$, we show that the set of possible $\kappa$ forms a subgroup of $\mathbb{R}_{>0}$ that we call the (measure-)scaling group of $X$. This group is invariant under measure-scaling quasi-isometries. In the context of Cayley graphs,  this implies for instance that two uniform lattices in a given locally compact group have same scaling groups.
We compute the scaling group in a number of cases. For instance it is all of $\mathbb{R}_{>0}$ for lattices in Carnot groups, SOL or solvable Baumslag Solitar groups, but is a (strict) subgroup $\mathbb{Q}_{>0}$ for lamplighter groups over finitely presented amenable groups. 
\end{abstract}

\tableofcontents

\section{Introduction}

Famously, quasi-isometries between non-amenable graphs of bounded degree all lie at finite distance from bijections \cite{MR1693847, MR1700742}, while this may not be the case for amemable graphs \cite{MR1616135, MR1616159} or even for finitely generated groups \cite{MR2730576, MR3318424}. In this article, we study a subclass of quasi-isometries for which the default of lying at finite distance from a bijection can be quantified. More precisely, we say that a quasi-isometry $f : X \to Y$ is \emph{measure-scaling} if there exists some $\kappa>0$, referred to as the \emph{(measure-)scaling factor}, such that $f$ is \emph{quasi-$\kappa$-to-one}, i.e. there exists a constant $C >0$ such that
$$\left| \kappa|A| - |f^{-1}(A)| \right| \leq C \cdot | \partial A| \text{ for all $A \subset Y$ finite.}$$
Motivations for such a definition can be found below, but it is worth noticing that an $n$-to-one quasi-isometry is automatically quasi-$n$-to-one; that a quasi-isometry lies at bounded distance from a bijection if and only if it is quasi-one-to-one \cite{MR1700742}; and that, when $\kappa$ is rational, we have the following characterization (see Proposition \ref{prop:kappa} for a more complete result):

\begin{prop}\label{prop:rational}
Let $X$ and $Y$ be bounded degree graphs. 
If $\kappa=m/n$ is rational, then any quasi-$\kappa$-to-one quasi-isometry from $X$ to $Y$ lies at bounded distance from the composition of a surjective $m$-to-one map with the canonical injection $X\to X\times \mathbb{Z}/n\mathbb{Z}$
\end{prop}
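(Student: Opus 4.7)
The plan is to reduce the rational case $\kappa=m/n$ to the integer case $\kappa=m$ by passing to the product $\tilde X := X \times \mathbb Z/n\mathbb Z$ (equipped with its natural bounded-degree product graph structure), and then to rectify the resulting integer-scaling quasi-isometry to a surjective $m$-to-one map via a Hall-style matching argument à la Whyte.

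For the reduction, let $p : \tilde X \to X$ be the projection and $\iota : X \to \tilde X$, $x \mapsto (x,0)$, the canonical injection; both are quasi-isometries (the fibers of $p$ having uniformly bounded size $n$), and $p \circ \iota = \mathrm{id}_X$. Setting $\tilde f := f \circ p : \tilde X \to Y$, one has $f = \tilde f \circ \iota$, and for any finite $A \subset Y$, $\tilde f^{-1}(A) = f^{-1}(A) \times \mathbb Z/n\mathbb Z$, so $|\tilde f^{-1}(A)| = n \, |f^{-1}(A)|$. Multiplying the quasi-$(m/n)$-to-one inequality for $f$ by $n$ yields
$$\bigl| \, m \cdot |A| - |\tilde f^{-1}(A)| \, \bigr| \leq nC \cdot |\partial A|,$$
so $\tilde f$ is quasi-$m$-to-one.

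The main step is to show that any quasi-$m$-to-one quasi-isometry $g : Z \to Y$ (with $m$ a positive integer) lies at bounded distance from a surjective $m$-to-one map. I would consider the bipartite graph $\Gamma$ with parts $Z$ and $Y \times \{1, \ldots, m\}$, connecting $x$ to $(y,j)$ whenever $d_Y(g(x),y) \leq R$ for a constant $R$ to be chosen large. A perfect matching in $\Gamma$, post-composed with the projection to $Y$, produces a surjective $m$-to-one map at distance $\leq R$ from $g$. By the Hall--K\"onig theorem for locally finite bipartite graphs, the existence of a perfect matching reduces to Hall's condition on both sides. For a finite $A \subset Y \times \{1,\ldots,m\}$ with image $B = \pi_1(A)$, one has $|A| \leq m|B|$ and, using the quasi-$m$-to-one lower bound,
$$|N_\Gamma(A)| = |g^{-1}(N_R^Y(B))| \geq m \cdot |N_R^Y(B)| - C \cdot |\partial N_R^Y(B)|,$$
which is $\geq |A|$ provided $m \cdot |N_R^Y(B) \setminus B| \geq C \cdot |\partial N_R^Y(B)|$. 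This can be secured by choosing $R$ large, exploiting bounded degree of $Y$ (the $R$-thickening creates at least $|\partial B|$ new vertices per unit of $R$ without appreciably blowing up the boundary). The Hall condition on the $Z$-side is handled symmetrically, using the quasi-$m$-to-one upper bound applied to $g(S)$ for $S \subset Z$ finite.

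Combining the two pieces, $\tilde f$ is at bounded distance from a surjective $m$-to-one map $h : \tilde X \to Y$, whence $f = \tilde f \circ \iota$ is at bounded distance from $h \circ \iota$, which is precisely the composition of the canonical injection $X \hookrightarrow X \times \mathbb Z/n\mathbb Z$ with a surjective $m$-to-one map. The main obstacle is the verification of Hall's condition in the integer case, specifically absorbing the boundary defect $C|\partial \cdot|$ into the $R$-thickening; this is a delicate but essentially standard extension of Whyte's bijective rectification theorem, presumably carried out as part of the ``more complete'' Proposition~\ref{prop:kappa}.
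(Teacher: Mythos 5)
Your reduction to the integer case via $\tilde X=X\times\mathbb{Z}/n\mathbb{Z}$ and $\tilde f=f\circ p$ is correct and is essentially half of what the paper does: in Proposition~\ref{prop:kappa} the paper also composes on the \emph{target} side with the injection $Y\hookrightarrow Y\times\mathbb{Z}/m\mathbb{Z}$, which reduces all the way to the quasi-one-to-one case and allows it to invoke Whyte's theorem (Proposition~\ref{prop:QIdistBij}) as a black box. The genuine gap is in your main step, the direct Hall-type rectification of a quasi-$m$-to-one map. Your verification of Hall's condition requires, for a \emph{fixed} $R$ and \emph{all} finite $B\subset Y$, the inequality $m\bigl(|N_R^Y(B)|-|B|\bigr)\geq C\cdot|\partial N_R^Y(B)|$. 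The justification offered --- that thickening adds vertices ``without appreciably blowing up the boundary'' --- fails in general bounded degree graphs: one only has $|\partial N_{r+1}(B)|\leq D\cdot|\partial N_r(B)|$ with $D$ the degree bound, so the boundary of the thickening can grow geometrically, while the accumulated gain $|N_R(B)|-|B|=\sum_{r<R}|\partial N_r(B)|$ then stays merely comparable to its last term. Concretely, if $B$ is a ball of radius $\rho$ in a $3$-regular tree --- or in a large tree-like ball embedded inside an otherwise amenable bounded degree graph --- then $|N_R(B)|-|B|$ and $|\partial N_R(B)|$ are comparable for all $R\leq\rho$, so the required inequality fails for every choice of $R$ as soon as $C$ exceeds a fixed multiple of $m$. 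This is exactly the hard case that Whyte's proof handles via uniformly finite homology (the equivalence between $[c]=0$ and the boundary bound, Theorems~A and~C of \cite{MR1700742}); it cannot be absorbed by enlarging $R$.

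The repair is cheap, and brings you back to the paper's route: your ``main step'' is true but should be \emph{deduced} from Whyte's theorem rather than reproved. If $g:Z\to Y$ is quasi-$m$-to-one, compose with the injection $\iota:Y\hookrightarrow Y\times\mathbb{Z}/m\mathbb{Z}$, which is quasi-$(1/m)$-to-one; then $\iota\circ g$ is quasi-one-to-one by Proposition~\ref{prop:EasyKappa}(ii), hence at bounded distance from a bijection $\beta$ by Proposition~\ref{prop:QIdistBij}, and $\mathrm{pr}_Y\circ\beta$ is a surjective $m$-to-one map at bounded distance from $\mathrm{pr}_Y\circ\iota\circ g=g$. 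With that substitution your argument coincides with the paper's proof of the implication $(i)\Rightarrow(ii)$ of Proposition~\ref{prop:kappa}.
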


Observe that, for non-amenable graphs, all quasi-isometries are measure-scaling quasi-isometries for all possible scaling factors, in particular $\kappa=1$. At the opposite, one easily checks that, for amenable graphs, a quasi-isometry can only be quasi-$\kappa$-to-one for one value of $\kappa$ (see Lemma \ref{rem:scalingNonAmen} below). Also, we emphasize that not all quasi-isometries are measure-scaling quasi-isometries. For instance, the map $f:$ $\mathbb{Z} \to \mathbb{Z}$ defined as $f(n)=n$ for $n\geq 0$ and $f(n)=2n$ for $n<0$ is a quasi-isometry but not a measure-scaling quasi-isometry.
Nevertheless, we will see below that ``classical'' quasi-isometries between finitely generated groups are measure-scaling: for instance any two uniform lattices in a locally compact group are measure-scaling quasi-isometric.

The main question addressed in this article is the following, mainly motivated by our work \cite{QIwreath} (see Theorem~\ref{thm:QIlamp} below):

\begin{question}\label{questionIntro}
Given a finitely generated (amenable) group $G$, for which $\kappa \in \mathbb{R}_+$ does there exist a quasi-$\kappa$-to-one quasi-isometry $G \to G$?
\end{question}
In \cite[Section~4]{MR2139686}, Tullia Dymarz asks ``which amenable groups admit $n$-to-one quasi-isometries?'' By Proposition \ref{prop:rational}, her question can be seen as a particular case of the latter. 
 Our first observation is that all the possible values for $\kappa$ define a multiplicative subgroup in $\mathbb{R}$ (see Proposition \ref{prop:EasyKappa} for a more detailed statement).

\begin{prop}\label{propIntro:scQI}
For amenable graphs with bounded degree, measure-scaling quasi-isometries are stable under composition, the scaling factor behaving multiplicatively. 
\end{prop}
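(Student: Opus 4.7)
The plan is to show that if $f \colon X \to Y$ is quasi-$\kappa$-to-one with constant $C_f$, and $g \colon Y \to Z$ is quasi-$\lambda$-to-one with constant $C_g$, then their composition $g \circ f$ is quasi-$(\kappa\lambda)$-to-one (it is already a quasi-isometry by the usual argument). Given a finite $A \subset Z$, I would set $B := g^{-1}(A) \subset Y$, which is finite because a quasi-isometry of bounded degree graphs has uniformly bounded fibers. Since $(g \circ f)^{-1}(A) = f^{-1}(B)$, the triangle inequality together with the two hypotheses yields
$$
\bigl| \kappa\lambda|A| - |f^{-1}(B)| \bigr| \leq \bigl| \kappa|B| - |f^{-1}(B)| \bigr| + \kappa \bigl| \lambda|A| - |B| \bigr| \leq C_f |\partial B| + \kappa C_g |\partial A|.
$$

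To finish, one must bound $|\partial B|$ linearly in $|\partial A|$. The guiding principle is that, for a quasi-isometry between bounded-degree graphs, taking preimages distorts boundaries only by a multiplicative constant. Concretely, any vertex $y \in \partial B$ has a neighbor sent by $g$ to the opposite side of $A$, so $g(y)$ lies within some uniform distance $R$ of $\partial A$, where $R$ depends only on the quasi-isometry constants of $g$. Bounded degree bounds the size of the $R$-neighborhood of $\partial A$ by $D^R|\partial A|$, and the fact that the fibers of $g$ have cardinality at most some uniform $N$ then gives $|\partial B|\leq N D^R |\partial A|$. Plugging this back in completes the proof, producing an overall constant depending only on $C_f$, $C_g$, $\kappa$, the QI constants of $g$, and the degree bounds.

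The ``hard'' part is really just the auxiliary boundary estimate of the previous paragraph; everything else is a direct application of the triangle inequality. A minor notational subtlety is the convention for $\partial$ (edge versus vertex boundary), but in bounded-degree graphs these quantities differ only by a multiplicative constant, so either convention leads to the same statement after adjusting the implicit constants. Note also that amenability plays no role in the above computation itself; it is implicitly needed only to ensure that each measure-scaling quasi-isometry has a \emph{unique} scaling factor, so that the statement ``scaling factors multiply'' is well-posed.
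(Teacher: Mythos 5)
Your proof is correct and follows essentially the same route as the paper's (Proposition \ref{prop:EasyKappa}(ii)): the triangle-inequality decomposition over $B=g^{-1}(A)$ is identical, and your auxiliary estimate $|\partial B|\lesssim|\partial A|$ is exactly the paper's Fact \ref{fact:EasyOne'}(iii), which you reprove directly in the bounded-degree graph setting. Your closing remark that amenability is only needed to make the scaling factor unique matches the paper's Lemma \ref{lem:KappaWellDefined}.
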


Given a metric space $X$, we recall that 
\[\mathrm{QI}(X):= \{ \text{quasi-isometry } X \to X \} / \text{bounded distance}\]  forms a group called the \emph{quasi-isometry} group of $X$. We deduce from Proposition \ref{propIntro:scQI} that measure-scaling quasi-isometries of an amenable graph $X$ (up to finite distance) form a subgroup of the quasi-isometry group. Let us denote it by $\mathrm{QI_{sc}}(X)$.  Moreover, the map that associates to every measure-scaling quasi-isometry the scaling factor induces a morphism $\mathrm{QI_{sc}}(X)\to \mathbb{R}_{>0}$.
 The range of this morphism, the \emph{(measure-)scaling group} of $X$ denoted by $\mathrm{Sc}(X)$, provides us with an algebraic measure-scaling quasi-isometry invariant of our graph.

When studying Question~\ref{questionIntro}, it turns out to be natural to extend the definition of measure-scaling quasi-isometries from graphs with bounded degree to metric measure spaces. More precisely, a measurable map $f : (X,\mu) \to (Y,\nu)$ between two standard metric measure spaces is \emph{measure-scaling} if there exists some $\kappa>0$, referred to as the \emph{(measure-)scaling factor}, such that $f$ is \emph{quasi-$\kappa$-to-one}, i.e.
$$\left| \kappa \cdot \nu(A) - \mu(f^{-1}(A)) \right| \leq C \cdot \nu (\partial_2 A) \text{ for all thick subspace $A \subset Y$.}$$
We refer to Section \ref{sec:MMspaces} for all the needed definitions. The main justification of such a generalization comes from the following observation, which constitutes the main contribution of our article: if a given group $G$ can be realized as a uniform lattice in some compactly generated locally compact group $H$, then $G$ and $H$ share the same scaling group but the scaling group of $H$ may be easier to compute. More precisely (see Theorem~\ref{thm:FundThm} for a more general result),

\begin{prop}\label{prop:scalingcopsi}
Let $G$ and $H$ be compactly generated locally compact groups equipped with left Haar measures $\mu_G$ and $\mu_H$ and word metrics associated to compact generating subsets. A continuous proper cocompact morphism $\phi:G\to H$ is a measure-scaling quasi-isometry. In particular,
$\mathrm{Sc}(H)= \mathrm{Sc}(G)$.
\end{prop}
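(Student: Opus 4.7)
\medskip

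\noindent\textbf{Proof proposal.} The quasi-isometry part is the classical Milnor--Schwarz theorem for compactly generated locally compact groups: a continuous proper cocompact morphism induces a quasi-isometry between word metrics coming from compact generating sets. So the work lies entirely in producing a scaling factor $\kappa$ and establishing the quasi-$\kappa$-to-one inequality, and then in deducing the equality of scaling groups.

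The natural candidate for $\kappa$ comes from a Borel fundamental domain. Since $\phi$ is proper, the kernel $K=\ker\phi$ is compact; since $\phi$ is cocompact, the image $\phi(G)$ is closed and $\phi(G)\backslash H$ is compact, so there exists a Borel fundamental domain $F\subset H$ for the right action of $\phi(G)$ having relatively compact closure. Define
\[
\kappa := \frac{\mu_G(\phi^{-1}(F))}{\mu_H(F)}.
\]
Note $\mu_G(\phi^{-1}(F))<\infty$: properness sends $\overline F$ to a compact subset of $G$ containing $\phi^{-1}(F)$. For every $\gamma\in\phi(G)$ and every lift $g_\gamma\in\phi^{-1}(\gamma)$, one has $\phi^{-1}(\gamma F)=g_\gamma\cdot\phi^{-1}(F)$, so by left invariance of $\mu_G$,
\[
\mu_G\bigl(\phi^{-1}(\gamma F)\bigr)=\mu_G\bigl(\phi^{-1}(F)\bigr)=\kappa\,\mu_H(\gamma F).
\]
Hence $\mu_G(\phi^{-1}(B))=\kappa\,\mu_H(B)$ whenever $B$ is a disjoint union of translates $\gamma F$.

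To handle a general thick subset $A\subset H$, I would sandwich $A$ between its inner and outer $F$-block approximations
\[
A^{\mathrm{in}}=\bigsqcup\{\gamma F:\gamma F\subset A\},\qquad A^{\mathrm{out}}=\bigsqcup\{\gamma F:\gamma F\cap A\neq\emptyset\}.
\]
The symmetric difference $A^{\mathrm{out}}\setminus A^{\mathrm{in}}$ consists of those translates meeting both $A$ and its complement, hence is contained in the $r$-neighbourhood of $\partial A$ with $r=\mathrm{diam}(F)$. By definition of the thick boundary in the metric measure setting (Section~\ref{sec:MMspaces}), $\mu_H(A^{\mathrm{out}}\setminus A^{\mathrm{in}})\le C\,\mu_H(\partial_2 A)$ for a constant depending only on $r$ and the doubling behaviour of $\mu_H$. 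Applying the exact identity to $A^{\mathrm{in}}$ and $A^{\mathrm{out}}$, and using the monotone inclusions $\phi^{-1}(A^{\mathrm{in}})\subseteq\phi^{-1}(A)\subseteq\phi^{-1}(A^{\mathrm{out}})$, one obtains
\[
\bigl|\kappa\,\mu_H(A)-\mu_G(\phi^{-1}(A))\bigr|\le 2\kappa\,\mu_H(A^{\mathrm{out}}\setminus A^{\mathrm{in}})\le C'\,\mu_H(\partial_2 A),
\]
which is the quasi-$\kappa$-to-one condition. This is the main technical step, and the only real obstacle is to check that the $r$-neighbourhood of $\partial A$ is controlled by $\mu_H(\partial_2 A)$ for every thick set; this should follow directly from the chosen definition of thickness.

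For the ``in particular'' statement, let $\phi':H\to G$ be any quasi-inverse of $\phi$. By the same argument applied to $\phi'$ (or by a direct verification using a quasi-inverse constructed from a Borel section of $\phi$), $\phi'$ is measure-scaling with factor $1/\kappa$. The multiplicativity of the scaling factor under composition (Proposition~\ref{propIntro:scQI}) then gives: for every $f\in\mathrm{QI_{sc}}(H)$ with factor $\lambda$, the conjugate $\phi'\circ f\circ\phi$ lies in $\mathrm{QI_{sc}}(G)$ with factor $\kappa^{-1}\lambda\kappa=\lambda$. This shows $\mathrm{Sc}(H)\subset\mathrm{Sc}(G)$, and the reverse inclusion follows by symmetry.
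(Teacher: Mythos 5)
Your strategy is sound and close in spirit to the paper's, but it has a genuine gap: the construction of $\kappa$ only makes sense when $\phi(G)$ is \emph{discrete} in $H$. A Borel transversal $F$ for the cosets of a closed cocompact subgroup $L=\phi(G)$ has positive Haar measure if and only if $L$ is discrete; when $L$ is non-discrete (for instance when $\phi$ is surjective, e.g.\ an automorphism of a connected Lie group --- exactly the case the paper needs for Corollary~\ref{cor:scalinggroupR} and $\mathrm{Sc_{Aut}}$), one has $\mu_H(F)=0$ and typically $\mu_G(\phi^{-1}(F))=0$ as well, so your $\kappa=\mu_G(\phi^{-1}(F))/\mu_H(F)$ is $0/0$. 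Moreover $H=\bigsqcup_{\gamma\in\phi(G)}\gamma F$ is then an uncountable union of null sets, so the exact identity $\mu_G(\phi^{-1}(B))=\kappa\,\mu_H(B)$ for ``block'' sets cannot be obtained by additivity, and the inner/outer approximations $A^{\mathrm{in}},A^{\mathrm{out}}$ no longer give a positive-measure sandwich. (There is also a small left/right slip: you take a fundamental domain for the right action but then use left translates $\gamma F$; and note that $\mu_H(\gamma F)=\mu_H(F)$ uses left invariance of $\mu_H$, which should be said.)

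The paper avoids this by viewing $H$ as a standard metric measure space on which $G$ acts by $g\cdot h=\phi(g)h$ (Theorem~\ref{thm:FundThm}) and invoking the disintegration (Weil) formula (\ref{eq:desint}): $\mu_H$ disintegrates over a relatively compact fundamental domain $D$ for the orbit space as $\int_D\bigl(\int_{G/G_x}\cdot\,d\mu_x\bigr)d\nu(x)$, where $\nu$ is a finite measure on $D$ that is \emph{not} the restriction of $\mu_H$ and is positive even when $\mu_H(D)=0$. The quasi-inverse of the orbit map is then \emph{exactly} measure $\nu(D)$-to-one, and Proposition~\ref{prop:EasyKappa} finishes the argument with no sandwich estimate needed. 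Your proof is essentially the discrete special case of this (and does correctly recover Corollary~\ref{cor:LatticesScaing}); to cover the general statement you would need to replace the block decomposition by the disintegration of $\mu_H$ along the $\phi(G)$-orbits. Your boundary estimate via Fact~\ref{fact:EasyOne}(iv) and your deduction of $\mathrm{Sc}(H)=\mathrm{Sc}(G)$ by conjugating with a quasi-inverse are both correct.
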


As a particular case, if $H$ is a uniform lattice in a locally compact group $G$, then the inclusion map is a measure-scaling quasi-isometry. Combining Corollary \ref{cor:LatticesScaing} with \cite{MR1700742}, we obtain the following interesting fact.
 
 \begin{prop}\label{prop:latticesBilpii}
Two uniform lattices with same covolume in a compactly generated locally compact group are bi-Lipschitz equivalent.  
\end{prop}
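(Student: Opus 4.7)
The plan is to combine Proposition~\ref{prop:scalingcopsi}, applied to the inclusion of each lattice, with the characterization of quasi-$1$-to-one quasi-isometries as bijections up to finite distance provided by Proposition~\ref{prop:rational}.

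First I would observe that if $\Gamma \subset G$ is a uniform lattice, then the inclusion $\iota \colon \Gamma \hookrightarrow G$ is a continuous, proper, cocompact morphism, so Proposition~\ref{prop:scalingcopsi} (via the announced Corollary~\ref{cor:LatticesScaing}) asserts that $\iota$ is a measure-scaling quasi-isometry. The scaling factor should be read off directly from the definition: for any sufficiently thick $A \subset G$, the preimage $\iota^{-1}(A) = A \cap \Gamma$ (with counting measure) satisfies $|A \cap \Gamma| \approx \mu_G(A)/\mathrm{covol}(\Gamma)$ up to a boundary error controlled by a fundamental domain, so $\iota$ has scaling factor $1/\mathrm{covol}(\Gamma)$.

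Now given two uniform lattices $\Gamma_1, \Gamma_2 \subset G$ of common covolume $v$, Proposition~\ref{propIntro:scQI} produces a measure-scaling quasi-isometry $f \colon \Gamma_1 \to \Gamma_2$ obtained by composing $\iota_1$ with a quasi-inverse of $\iota_2$, with scaling factor $(1/v)\cdot v = 1$. Thus $f$ is quasi-$1$-to-one. Applying Proposition~\ref{prop:rational} with $m = n = 1$ (which in this special case is exactly Whyte's theorem \cite{MR1700742}), $f$ lies at bounded distance from a bijection $\Gamma_1 \to \Gamma_2$. Since Cayley graphs of finitely generated groups have bounded degree, any bijective quasi-isometry between them is automatically bi-Lipschitz, yielding the desired equivalence.

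The main obstacle I anticipate is the explicit identification of the scaling factor in the first step: Proposition~\ref{prop:scalingcopsi} only guarantees that $\iota$ is measure-scaling for \emph{some} $\kappa > 0$, and one has to verify that this constant is indeed $1/\mathrm{covol}(\Gamma)$. I expect this to reduce to a Fubini-type comparison of the counting measure on $\Gamma$ with Haar measure on $G$ via a Borel fundamental domain, the discrepancy being absorbed into the boundary term $\nu(\partial_2 A)$ thanks to the compactness of the fundamental domain. Everything else is a formal manipulation using the group structure of $\mathrm{QI}_{\mathrm{sc}}$ established in Proposition~\ref{propIntro:scQI}.
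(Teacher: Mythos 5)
Your proposal is correct and follows essentially the same route as the paper: the authors likewise obtain from Corollary~\ref{cor:LatticesScaing} a quasi-one-to-one quasi-isometry $\Gamma_1\to\Gamma_2$ (the scaling factor $\mu(G/\Gamma_2)/\mu(G/\Gamma_1)=1$ being computed exactly via the fundamental-domain/disintegration argument you anticipate), and then invoke Whyte's theorem to upgrade it to a bijection at bounded distance, hence a bi-Lipschitz equivalence. The only cosmetic difference is that the paper cites Proposition~\ref{prop:QIdistBij} directly rather than the $m=n=1$ case of Proposition~\ref{prop:rational}.
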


Observe that an automorphism of a locally compact compactly generated group is a quasi-isometry. Since it multiplies the Haar measure by a given factor $\lambda>0$, it is a measure-scaling quasi-isometry of scaling factor $\kappa=\lambda^{-1}$. The set of scaling factors of automorphisms is therefore a subgroup of $\mathrm{Sc}(G)$, which we denote by $\mathrm{Sc_{Aut}}(G)$. The group  $\mathrm{Sc_{Aut}}(G)$ is obviously trivial if $G$ is discrete. But it may happen that $\mathrm{Sc}(G)$ coincides with $\mathrm{Sc_{Aut}}(\bar{G})$ for some locally compact group $\bar{G}$ containing $G$ as a lattice.
As an easy illustration, notice that $\mathbb{Z}^n$ is uniform lattice in $\mathbb{R}^n$. Since $\mathrm{Sc_{Aut}}(\mathbb{R}^n)=\mathbb{R}_{>0}$,
we deduce that $\mathrm{Sc}(\mathbb{Z}^n)=\mathrm{Sc}(\mathbb{R}^n) = \mathbb{R}_{>0}$. 
This remark allows us to compute the scaling group in a variety of examples, including solvable Baumslag-Solitar groups and lattices in Carnot groups or $\mathrm{SOL}$ (see Corollary \ref{cor:scalinggroupR}). 
Lamplighter groups provide examples  of groups for which $\mathrm{Sc}(G)\neq \mathbb{R}_{>0}$. 

\begin{prop}
Let $F$ be a non-trivial finite group and $H$ a finitely presented amenable group. Then $\mathrm{Sc}(F \wr H)$ is trivial if $H$ is one-ended and reduced to an explicit proper subgroup of $\mathbb{Q}_{>0}$ if $H$ is two-ended. 
\end{prop}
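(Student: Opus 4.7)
The plan is to handle the two cases separately, relying in both on the quasi-isometry structure theorem for wreath products (Theorem~\ref{thm:QIlamp}, drawn from our companion work \cite{QIwreath}), together with Propositions~\ref{propIntro:scQI} and~\ref{prop:scalingcopsi}.

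For the one-ended case, I would invoke Theorem~\ref{thm:QIlamp} to reduce any self quasi-isometry $f$ of $G = F\wr H$ to a product-type quasi-isometry: up to bounded distance, $f$ respects the canonical fibration with fiber $\bigoplus_H F$. Given a ``lamp-thick'' subset $A$ above a Folner set $B\subset H$, for which $|A|\asymp |F|^{|B|}\cdot|B|$ under an appropriate notion of thickness, the preimage $f^{-1}(A)$ is of the same form above a Folner set $B'\subset H$ of comparable cardinality. The measure-scaling inequality applied to $A$ then reads $\kappa \cdot |F|^{|B|}\cdot|B| \approx |F|^{|B'|}\cdot|B'|$ modulo boundary errors, which forces $|B|=|B'|$ exactly for large $B$, and hence $\kappa=1$, since the exponential dependence on $|B|$ leaves no room for a non-trivial multiplicative constant in front.

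For the two-ended case, $H$ is virtually $\mathbb{Z}$. By Proposition~\ref{prop:scalingcopsi} applied to a finite-index inclusion between $F\wr H$ and a wreath product over $\mathbb{Z}$, obtained after possibly replacing $F$ by an appropriate power, it suffices to compute $\mathrm{Sc}(F\wr\mathbb{Z})$. The lower bound is produced by realizing $F\wr\mathbb{Z}$ as a cocompact lattice in a horocyclic-product locally compact group $\bar G$ of Diestel--Leader type and observing that dilating automorphisms of $\bar G$ multiply the Haar measure by factors in the multiplicative subgroup $\Gamma\leq\mathbb{Q}_{>0}$ generated by the primes dividing $|F|$; Proposition~\ref{prop:scalingcopsi} then transfers these scalings to $F\wr\mathbb{Z}$. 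For the matching upper bound, I would apply the analogue of Theorem~\ref{thm:QIlamp} in the two-ended case (following Eskin--Fisher--Whyte and \cite{QIwreath}) to show that every measure-scaling self quasi-isometry of $F\wr\mathbb{Z}$ is induced, up to bounded distance, by such a dilating automorphism of $\bar G$, so that its scaling factor lies in $\Gamma$. The subgroup $\Gamma$ is proper in $\mathbb{Q}_{>0}$ whenever some prime does not divide $|F|$, which holds for every non-trivial finite group.

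The main obstacle I anticipate is the upper bound in the two-ended case: translating the geometric quasi-isometry classification of $F\wr\mathbb{Z}$ into a precise measure-scaling statement and identifying the scaling subgroup exactly with $\Gamma$ rather than some potentially larger subgroup of $\mathbb{Q}_{>0}$. A secondary subtlety is in the one-ended case, where the argument must show that the exponential ``lamp'' contribution kills any non-trivial scaling factor even when the base $H$ itself admits non-trivial measure-scaling self quasi-isometries, as occurs for $H=\mathbb{Z}^n$ with $\mathrm{Sc}(\mathbb{Z}^n)=\mathbb{R}_{>0}$.
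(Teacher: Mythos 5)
The paper's own proof of this statement is a two-line citation: part (i) is quoted directly from \cite{QIwreath}, and part (ii) is deduced from the Eskin--Fisher--Whyte description of the quasi-isometries of $F\wr\mathbb{Z}$ together with Dymarz's analysis. Your two-ended argument follows essentially that route and is sound in outline: the reduction of a two-ended $H$ to $\mathbb{Z}$ via a finite-index copy of $F^{k}\wr\mathbb{Z}$ (whose lamp group has the same prime divisors), the lower bound from explicit height-shifting maps of the Diestel--Leader graph $\mathrm{DL}(n,n)$, and the upper bound from rigidity. One correction: the upper bound says that every self-quasi-isometry is at bounded distance from a \emph{height-respecting} map, not from an automorphism of an ambient group; one then reads off that the scaling factor of a height-respecting map lies in the subgroup generated by the primes dividing $|F|$.

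The one-ended case, however, contains a genuine gap. First, Theorem~\ref{thm:QIlamp} is an if-and-only-if criterion for when two lamplighters over the same base are quasi-isometric; it is not the structure theorem asserting that a self-quasi-isometry of $F\wr H$ respects the fibration. That structural (``aptolic'') rigidity is a separate and much harder theorem of \cite{QIwreath}, and it is exactly what the paper is citing. Second, and more seriously, the box-counting step does not yield $\kappa=1$. From $\kappa\,|F|^{|B|}|B|\approx |F|^{|B'|}|B'|$ with uniform multiplicative constants one can only conclude that $|B'|-|B|$ is bounded and that $\kappa$ is comparable to $|F|^{|B'|-|B|}$; nothing in this estimate forces $|B'|=|B|$, nor, even granting that, forces the comparability constants to tend to $1$. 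The decisive evidence that this step cannot suffice is part (ii) itself: for $H=\mathbb{Z}$ the scaling factors $|F|^{j}$, $j\in\mathbb{Z}$, are genuinely realized by quasi-isometries that act on such boxes exactly as your estimate permits, so an argument using only the exponential size of the boxes would ``prove'' $\mathrm{Sc}(F\wr\mathbb{Z})=\{1\}$ as well, which is false. Whatever forces $\kappa=1$ in the one-ended case must exploit one-endedness through the finer rigidity of \cite{QIwreath}, and it is not recoverable from the counting heuristic as written.
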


See Proposition \ref{prop:Sc(G)} for a more precise statement.
These facts rely on rigidity results obtained by Esking-Fisher-Whyte \cite{EFWII} for the case of $\mathbb{Z}$ (see \cite{MR2730576}) and on a recent work of the authors for the other cases \cite{QIwreath}.

\


\paragraph{Motivations and historical aspects.} An early question in geometric group theory, which can be found in \cite[\S 1.A']{GromovAsymptotic}, asks to which extend being quasi-isometric and being bi-Lipschitz equivalent are different. For instance, are two quasi-isometric graphs with bounded degree necessarily bi-Lipschitz equivalent? Partial positive answers were obtained for homogeneous trees \cite{MR1326733}, and next for hyperbolic groups \cite{Bogop}, before it was realized that a positive answer holds for every non-amenable graph with bounded degree in a strong way: 

\begin{thm}[\cite{MR1693847, MR1700742}]\label{thm:QIvsBil}
Every quasi-isometry between two non-amenable graphs with bounded degree lies at finite distance from a bijection.
\end{thm}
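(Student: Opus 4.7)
The plan is to invoke a matching-theoretic argument going back to Whyte. Starting from a quasi-isometry $f : X \to Y$, I would build a bipartite graph $\Gamma$ with parts $X$ and $Y$, connecting $x \in X$ to $y \in Y$ whenever $d_Y(f(x),y) \leq R$ for a large parameter $R$ to be fixed later. Because both $X$ and $Y$ have bounded degree and $f$ is Lipschitz with uniformly bounded-to-one (the latter following from the quasi-isometry inequality combined with uniform discreteness and bounded degree), $\Gamma$ is locally finite. A perfect matching in $\Gamma$ immediately produces a bijection $g : X \to Y$ with $d_Y(f(x), g(x)) \leq R$ for every $x$, so the whole theorem reduces to the existence of such a matching.

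The existence of a perfect matching in a locally finite bipartite graph is governed by the infinite version of Hall's marriage theorem: one must check that $|N_\Gamma(S)| \geq |S|$ for every finite $S$ lying in either part. Non-amenability of $Y$ enters through the expansion property: there exists $\varepsilon > 0$ such that $|B_Y(A,1)| \geq (1+\varepsilon)|A|$ for every finite $A \subset Y$, and iterating gives $|B_Y(A,r)| \geq (1+\varepsilon)^r |A|$.

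For a finite $S \subset X$, one notes that $N_\Gamma(S) = B_Y(f(S),R)$, and since $f$ is uniformly $K$-to-one we have $|f(S)| \geq |S|/K$. Combined with the expansion estimate, this yields $|N_\Gamma(S)| \geq (1+\varepsilon)^R |S|/K$, which beats $|S|$ for $R$ large enough. On the other side, $N_\Gamma(T) = f^{-1}(B_Y(T,R))$ for finite $T \subset Y$. Using the coarse surjectivity of $f$ (every $y \in Y$ lies within a constant $C$ of $f(X)$) together with the bounded-to-one property, one shows that $|f^{-1}(A)| \geq |A|/K'$ for any $A$ sufficiently fat compared to $C$; applying this to $B_Y(T,R)$ and using the expansion estimate again, one dominates $|T|$ once $R$ is taken large enough.

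The main obstacle will be the bookkeeping for the $Y$-side Hall condition: one must juggle coarse surjectivity, uniform bounded-to-one-ness, and the exponential amplification coming from non-amenability, while picking a single $R$ that makes both inequalities work simultaneously. Once both Hall conditions are verified, the matching theorem supplies a bijection at distance at most $R$ from $f$, which is the conclusion.
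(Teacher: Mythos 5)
Your argument is correct in outline, but note that the paper does not actually prove Theorem \ref{thm:QIvsBil}: it is quoted from the literature, and the only related machinery the paper develops is Proposition \ref{prop:QIdistBij}, which recalls Whyte's homological criterion ($f$ is at bounded distance from a bijection iff $f_*[X_1]=[X_2]$ in $H_0^{uf}$, iff $f$ is quasi-one-to-one). In the paper's framework the theorem would follow by combining that criterion with the observation (Remark \ref{rem:scalingNonAmen} via Fact \ref{fact:EasyOne'}) that non-amenability gives a linear isoperimetric inequality $|A|\leq C|\partial A|$, which makes the quasi-one-to-one condition vacuous. Your route is the direct matching argument underlying the original sources: it is more self-contained (no uniformly finite homology) and makes the role of expansion completely explicit, at the cost of not yielding the finer statement (the exact criterion for which quasi-isometries are at bounded distance from bijections) that the paper needs elsewhere. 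Two points in your sketch deserve to be pinned down. First, the one-sided Hall condition for a locally finite bipartite graph only produces a matching saturating that side; you need both one-sided matchings and then the K\"onig/Cantor--Schr\"oder--Bernstein argument (decompose the union of the two matchings into alternating paths and cycles and select edges componentwise) to get a genuine perfect matching. Second, your $Y$-side estimate can be made precise as follows: distinct points of $f(X)$ have disjoint, nonempty preimages, so $|f^{-1}(B_Y(T,R))|\geq |f(X)\cap B_Y(T,R)|$; since $f(X)$ is $K$-dense, $B_Y(T,R-K)$ is covered by the $K$-balls centred at points of $f(X)\cap B_Y(T,R)$, whence $|f(X)\cap B_Y(T,R)|\geq |B_Y(T,R-K)|/V_K\geq (1+\varepsilon)^{R-K}|T|/V_K$, which exceeds $|T|$ for $R$ large; a single such $R$ then works for both sides since both bounds are increasing in $R$. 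With these two completions the proof is sound.
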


In \cite{MR1700742}, the author prove something stronger by characterizing precisely which quasi-isometries lie at finite distance from a bijection, thanks to the \emph{uniformly finite homology} introduced in \cite{BW}. Without going to much into the details, it is proved in \cite{BW} that a graph of bounded degree is amenable if and only if $H_0^{uf}(X) \neq 0$. On the other hand, Whyte proves in \cite{MR1700742} that a quasi-isometry between any two graphs of bounded degree $f : X \to Y$ lies at finite distance from a quasi-isometry if and only if $f_\ast ([X])=[Y]$, or equivalently, according to \cite{BW} (see also \cite[Theorem~7.6]{MR1700742}), if there exists some $C>0$ such that
$$\left| |A| - |f^{-1}(A)| \right| \leq C \cdot | \partial A| \text{ for all $A \subset Y$ finite.}$$
Thus, by using our terminology, Whyte proved that a quasi-isometry lies at finite distance from a bijection if and only if it is quasi-one-to-one, which turns out to be automatic when the graphs are not amenable.

But the techniques introduced by Whyte apply to arbitrary graphs of bounded degree, including amenable graphs, where they can be used in order to understand the obstruction of a quasi-isometry to lie at finite distance from a bijective quasi-isometry. The first results in this direction are due to Dymarz in \cite{MR2139686, MR2730576}. Although counterexamples for amenable graphs with bounded degree have been previously constructed, for instance in \cite{MR1616135, MR1616159}, the question for finitely generated groups remained open. For instance, the problem can be found in \cite[IV.B.46(vi)]{MR1786869}. First, Dymarz proved in \cite{MR2139686} that no analogue of Theorem \ref{thm:QIvsBil} holds for amenable groups: If $G$ is a finitely generated amenable group and $H \leq G$ a proper finite-index subgroup, then the inclusion $H \hookrightarrow G$ does not lie at finite distance from a bijection \cite{MR2139686}. (This observation is also a consequence of Lemma~\ref{lem:KappaWellDefined} and Proposition~\ref{prop:EasyKappa} below.) But this does not rule out the existence of a bi-Lipschitz equivalence $H \to G$ (that would not be at finite distance from the inclusion). Next, in \cite{MR2730576}, she exhibited the first examples of finitely generated groups that are quasi-isometric but not bi-Lipschitz equivalent, by considering lamplighter groups over~$\mathbb{Z}$. In her work, Dymarz focuses on quasi-isometries $f$ that are $k$-to-one, but explicitly uses the fact that $f_\ast([X])=k [Y]$, or equivalently that there exists some $C>0$ such that
$$\left| k |A| - |f^{-1}(A)| \right| \leq C \cdot | \partial A| \text{ for all $A \subset Y$ finite.}$$
Thus, by using our terminology, quasi-$\kappa$-to-one quasi-isometries are introduced and exploited in \cite{MR2139686, MR2730576} for $\kappa \in \mathbb{N}$. It is worth noticing that Proposition~\ref{prop:rational} is essentially contained in concrete examples studied in \cite{MR2139686, MR2730576}. Similar arguments are used in \cite{MR3318424} in order to prove that higher rank lamplighter groups provide finitely presented examples of groups that are quasi-isometric but not bi-Lipschitz equivalent.

Thus, so far quasi-$\kappa$-to-one quasi-isometries were only defined for $\kappa$ an integer and they were confined to a few family of groups. Our main motivation to develop a general theoretical framework comes from the following quasi-isometric classification (from which new examples of finitely generated groups that are quasi-isometric but not bi-Lipschitz equivalent can be deduced, see \cite{QIwreath} for more details):

\begin{thm}[\cite{QIwreath}]\label{thm:QIlamp}
Let $H$ be a finitely presented one-ended amenable group and $m,n \geq 2$ two integers. The lamplighter groups $\mathbb{Z} /n \mathbb{Z} \wr H$ and $\mathbb{Z}/m \mathbb{Z} \wr H$ are quasi-isometric if and only if there exist integers $k,r,s \geq 1$ such that $n=k^r$, $m=k^s$, and $r/s \in \mathrm{Sc}(H)$. 
\end{thm}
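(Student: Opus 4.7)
The plan is to prove the two implications separately, combining the measure-scaling framework of this paper with a rigidity theorem that constitutes the main technical contribution of \cite{QIwreath}.

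For the \emph{sufficiency} direction, suppose $n=k^r$, $m=k^s$ and $r/s\in \mathrm{Sc}(H)$. Since $\mathrm{Sc}(H)$ is a group by Proposition~\ref{propIntro:scQI}, there is a measure-scaling self-quasi-isometry $\psi: H\to H$ of scaling factor $s/r$. I would build an explicit quasi-isometry $\Phi:\mathbb{Z}/n\mathbb{Z}\wr H\to \mathbb{Z}/m\mathbb{Z}\wr H$ using $\psi$ on the base coordinate. The key numerical identity is $n^s = k^{rs} = m^r$, so on clusters of $s$ source points the $n^s$ possible lamp configurations biject with the $m^r$ configurations on $r$ associated target points. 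Proposition~\ref{prop:rational} provides a concrete geometric model for $\psi$, namely (up to bounded distance) the composition $H\hookrightarrow H\times\mathbb{Z}/r\mathbb{Z}\twoheadrightarrow H$ of a canonical injection with an $s$-to-one surjection; this makes the ``$s$-to-$r$'' lamp correspondence geometrically explicit, the bounded-distance slop only introducing a bounded lamp correction.

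For the \emph{necessity} direction, assume $\Phi:\mathbb{Z}/n\mathbb{Z}\wr H\to \mathbb{Z}/m\mathbb{Z}\wr H$ is a quasi-isometry. The key input, which is the main technical contribution of \cite{QIwreath} and extends the Eskin--Fisher--Whyte analysis of the case $H=\mathbb{Z}$ to arbitrary one-ended finitely presented amenable bases, is a rigidity theorem: any such $\Phi$ must be compatible with the horocyclic-product (Diestel--Leader-type) structure of the lamplighters and therefore, up to bounded distance, splits as a measure-scaling base quasi-isometry $\bar\phi: H\to H$ together with a lamp re-encoding. Volume counting over F\o lner sequences of $H$ forces $\bar\phi$ to have scaling factor $\kappa$ satisfying $n^{\kappa}=m$, and the discrete nature of the lamp encoding forces $\kappa$ to be rational, say $\kappa=s/r$ in lowest terms. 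The equality $n^{s}=m^{r}$ combined with unique factorization in $\mathbb{Z}$ then produces an integer $k$ with $n=k^r$, $m=k^s$; since $s/r\in\mathrm{Sc}(H)$ and $\mathrm{Sc}(H)$ is a group, also $r/s\in\mathrm{Sc}(H)$.

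The main obstacle is clearly the rigidity theorem invoked in the reverse direction: without it a quasi-isometry between lamplighters need not visibly be built from a base map. Once rigidity is granted, translating it into the measure-scaling framework of this paper and performing the volume count is essentially routine, and the sufficiency direction is a combinatorial lifting argument once Proposition~\ref{prop:rational} is in hand. The heart of the argument thus lies in a subtle geometric analysis of quasi-isometries between groups with horocyclic-product structure, generalizing \cite{MR2730576, EFWII}.
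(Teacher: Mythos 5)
The paper does not prove this theorem: it is imported verbatim from \cite{QIwreath} as the motivation for developing the measure-scaling framework, so there is no internal proof to compare yours against. Judged on its own terms, your outline correctly identifies the shape of the argument in that reference, but it has a genuine gap: the necessity direction rests entirely on a ``rigidity theorem'' that you invoke as the main technical contribution of \cite{QIwreath} --- which is to say, you have reduced the statement to the very result being cited rather than proved it. Moreover, your description of that input is inaccurate for the case at hand: the horocyclic-product/Diestel--Leader picture and the Eskin--Fisher--Whyte coarse-differentiation machinery apply to $H=\mathbb{Z}$ (the two-ended case, as in \cite{MR2730576, EFWII}); for one-ended $H$ there is no such structure, and the splitting of a quasi-isometry into a base map plus a lamp re-encoding (``aptolicity'') is obtained in \cite{QIwreath} by entirely different means.

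The second, more concrete gap is the rationality of the exponent. Volume counting over F\o lner sets only yields $n^{\kappa}=m$ for some $\kappa\in\mathrm{Sc}(H)$, and when $\mathrm{Sc}(H)=\mathbb{R}_{>0}$ (e.g.\ $H=\mathbb{Z}^{2}$, by Corollary~\ref{cor:scalinggroupR}) this constraint is vacuous: it is satisfied by every pair $n,m\geq 2$ with $\kappa=\log m/\log n$. The entire content of the ``there exist $k,r,s$'' clause is that this $\kappa$ must be rational, so that unique factorization produces a common root $k$; asserting that ``the discrete nature of the lamp encoding forces $\kappa$ to be rational'' is precisely the point that needs proof (one must extract from the rigidity statement that the induced correspondence matches $n^{a}$ lamp states against $m^{b}$ lamp states for genuine integers $a,b$). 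Your sufficiency direction is plausible and consistent with Proposition~\ref{prop:rational} and Proposition~\ref{prop:EasyKappa} (and the ambiguity between $r/s$ and $s/r$ is harmless since $\mathrm{Sc}(H)$ is a group), but it too is only a sketch of the combinatorial lifting, not a construction.
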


The necessity of a global theory dedicated to quasi-$\kappa$-to-one quasi-isometries comes from this theorem, and the goal of the article is precisely to initiate such a study. This subject being largely new, it raises many natural questions, a selection of which can be found in the last section.

\paragraph{Acknowledgements.} We thank Y. Cornulier and T. Dymarz for their comments on the first version of our manuscript.

\section{Metric measure spaces}\label{sec:MMspaces}

Our goal is to define our notion of measure-scaling quasi-isometries in a context that  includes locally compact groups, Riemannian manifolds and bounded degree graphs. 
In this section, we introduce our standing assumptions on a metric measure space that are suitable for such a definition, and we prove a few technical points that will be useful in the sequel.

\begin{definition}[Metric measure spaces]
A \textbf{metric measure space} is a triple $(X,d,\mu)$ where $X$ is a locally compact topological space, $\mu$ is a non-trivial, locally finite, Borel measure, and $d$ is a measurable distance\footnote{We do not ask the distance to be continuous: typically, the word distance on a compactly generated locally compact  non-discrete group is not continuous.} on $X$ such that balls are relatively compact.
\end{definition}\label{.}

The key examples are: graphs of bounded degree; Riemannian manifolds with bounded geometry; compactly generated locally compact groups, where $d=d_S$ is the left-invariant word distance associated to a compact symmetric generating subset $S$, and $\mu$ is a left Haar measure. All these examples share some important properties that are captured by the following standing assumptions.
\begin{definition}[Standard metric measure spaces]
Metric measure spaces $(X,d,\mu)$ satisfying the following properties will be called standard:
\begin{itemize}
\item $X$ has \textbf{bounded geometry}: for all $r\geq 1$, there exist $V_r\geq v_r>0$ 
such that
$$\forall x\in X, \; v_r\leq \mu(B(x,r))\leq V_r.$$
\item $X$ is $1$\textbf{-geodesic}: for every pair of points $x,y\in X$ there is a sequence $x=x_0,\ldots,x_n=y$ such that $d(x_{i-1},x_i)\leq 1$ for all $i$ and $d(x,y)=\sum_{i=1}^n d(x_{i-1},x_i)$.
\end{itemize}
\end{definition}\label{def:metricmeasurespaceStandard}

\begin{definition}[Thick subspaces] A subspace $Z\subset X$ is called \textbf{thick}  if it is a union of closed balls of radius $1$ (in particular it has positive measure). 
\end{definition}
\begin{definition}[Boundary] 
Given a subspace $A\subset X$ and $R>0$, we define the \textbf{$R$-boundary} of $A$ to be $$\partial A:=A^{+R}\cap
(X\setminus A)^{+R}.$$ 
\end{definition}

\begin{definition}[Amenability]\label{def:amenable}
The space $X$ will be called amenable if there exists a sequence $A_n$ of thick subsets with finite measure such that 
\[\lim_{n\to \infty}\frac{\mu(\partial_R A_n)}{\mu(A_n)}=0\]
for all $R>0$.
Such a sequence is called a F\o lner sequence.
\end{definition}

\begin{remark}\label{rem:geomAmenable}
Assume that the space $X=(G,d,\mu)$ is a compactly generated locally compact group, $d$ is a left-invariant word metric associated to a compact generating set and $\mu$ is a left Haar measure. Then the metric measure space $X$ is amenable in the above sense if and only if  the group $G$ is amenable and unimodular (see \cite[Proposition~11.11]{T} or \cite[Proposition~4.F.5]{CornHarpe}).
\end{remark}

\begin{remark}\label{remark:Boundary}
Note that 
$$\partial_R A=\left(A^{+R}\setminus A\right)\cup \left(\bar{A}^{+R}\setminus \bar{A}\right)$$
where $\bar{A}$ denotes the complement of $A$ and $A^{+R}$ its $R$-neighborhood. This implies in particular that
\[(\partial_R A)^{+1}\subset \partial_{R+1} A.\]
\end{remark}
In practice it is enough to consider the case $R=2$ as shown by the following easy fact. We use throughout the following convenient convention: given a ball $B(x,r)$ and $\lambda>0$, the notation ``$\lambda B(x,r)$'' means $B(x,\lambda r)$.

\begin{fact}\label{fact:EasyOne}
Let $X$ be a standard metric measure space and let $A$ be a measurable subset of finite measure.  We denote $\bar{A}=X\setminus A$. 
Then for all $R\geq 2$, \begin{itemize}
\item[(i)] if $A$ is thick, then $\mu(A^{+R}) \leq (V_{R+3}/v_1)\cdot \mu(A);$
\item[(ii)] $\mu(A^{+R}\setminus A) \leq (V_{2+R}/v_1)\cdot \mu(\partial_2 A);$
\item[(iii)] $\mu(\bar{A}^{+R}\setminus \bar{A}) \leq (V_{2+R}/v_1)\cdot \mu(\partial_2 A);$
\item[(iv)] $ \mu(\partial_R A)\leq (2V_{2+R}/v_1)\cdot \mu(\partial_2 A).$
\end{itemize}
\end{fact}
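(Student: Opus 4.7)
The plan is to deduce (i) and (ii) from two separate volume-packing arguments, and then obtain (iii) and (iv) essentially for free from (ii). The guiding principle throughout is that, in a space of bounded geometry, a maximal family $Z$ of pairwise disjoint open unit balls sitting inside a set controls that set both from below (via $|Z|\cdot v_1$) and, via maximality, the measure of neighborhoods of the set from above (via $|Z|\cdot V_{s}$ for a suitable radius $s$).

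For part (i), I would use the thickness of $A$ to write $A=\bigcup_{y\in Y}\bar{B}(y,1)$ for some $Y\subset A$, and then extract by Zorn's lemma (or greedy selection) a subset $Z\subset Y$ maximal with the property that the open balls $\{B(z,1):z\in Z\}$ are pairwise disjoint. Maximality forces $Y\subset Z^{+2}$, so $A\subset Y^{+1}\subset Z^{+3}$ and $A^{+R}\subset Z^{+(R+3)}$, giving $\mu(A^{+R})\leq |Z|\cdot V_{R+3}$. Since the $B(z,1)$ are disjoint and each is contained in $A$ (because $z\in Y$), one has $|Z|\cdot v_1\leq \mu(A)$, and (i) follows.

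The heart of the proof is (ii). The first step is the boundary-reduction inclusion
\[
A^{+R}\setminus A\ \subset\ (A^{+1}\setminus A)^{+R}.
\]
For $x\in A^{+R}\setminus A$, I would invoke the $1$-geodesic hypothesis to produce $x=x_0,\dots,x_n=a\in A$ with $d(x_{j-1},x_j)\leq 1$ and $\sum d(x_{j-1},x_j)=d(x,a)\leq R$; letting $i\geq 1$ be the smallest index with $x_i\in A$, the predecessor $x_{i-1}$ lies in $A^{+1}\setminus A$ at distance at most $R$ from $x$. The second step is the elementary but crucial observation that for every $z\in A^{+1}\setminus A$ the whole ball $B(z,1)$ is contained in $\partial_2 A$ (every $w\in B(z,1)$ is within $2$ of $A$ through $z$, and within $1$ of $\bar{A}$ through $z$ itself). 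I would then rerun the packing argument: choose $Z\subset A^{+1}\setminus A$ maximal with $\{B(z,1):z\in Z\}$ pairwise disjoint; maximality gives $A^{+1}\setminus A\subset Z^{+2}$, hence $A^{+R}\setminus A\subset Z^{+(R+2)}$ of measure at most $|Z|\cdot V_{R+2}$, while step two gives $|Z|\cdot v_1\leq \mu(\partial_2 A)$, and combining the two bounds yields (ii).

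Statement (iii) is then (ii) applied verbatim to the complement $\bar{A}$, using $\partial_2\bar{A}=\partial_2 A$ and the fact that $\bar{A}^{+R}\setminus\bar{A}\subset A$ has finite measure, which keeps the argument within the standing hypotheses. Finally, (iv) is immediate from Remark~\ref{remark:Boundary}, which decomposes $\partial_R A$ as the disjoint union $(A^{+R}\setminus A)\sqcup(\bar{A}^{+R}\setminus\bar{A})$, so that summing (ii) and (iii) introduces the factor $2$. The main obstacle is the boundary-reduction inclusion opening (ii): without the $1$-geodesic assumption one cannot trade ``distance at most $R$ from $A$'' for ``distance at most $1$ from $A$'' at cost $R$ in the radius, and the whole chain (ii)$\Rightarrow$(iv) collapses into a circular bound involving $\mu(\partial_R A)$ rather than $\mu(\partial_2 A)$.
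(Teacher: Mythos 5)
Your proof is correct and follows essentially the same route as the paper's: a maximal family of pairwise disjoint unit balls gives both the lower volume bound and, by maximality, the covering bound in (i) and (ii); the $1$-geodesic hypothesis produces a point of $A^{+1}\setminus A$ whose unit ball lies in $\partial_2 A$; and (iii), (iv) follow by symmetry and the decomposition of $\partial_R A$. The only cosmetic difference is that in (ii) you extract the maximal packing from $A^{+1}\setminus A$ rather than from $\partial_2 A$ directly, which changes nothing.
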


\begin{proof} Let us prove (i).
Let $\mathbf{B}$ be a maximal set of disjoint $1$-balls contained in $A$. Let $y\in A$. Since $A$ is thick, $y$ belongs to a ball $B_0$ of radius $1$ contained in $A$. Since $\mathbf{B}$ is maximal, $B_0$ must interest an element $B$ of $\mathbf{B}$ non-trivially. Hence $y$ lies in $3B$. It follows that $A$ is contained in $\cup_{B\in \mathbf{B}} 3B$, and therefore that $A^{+R}$ is contained in $\cup_{B\in \mathbf{B}} (3+R)B$. We deduce that \[\mu(A^{+R})\leq \frac{V_{R+3}}{v_{1}}\sum_{B\in \mathbf{B}}\mu(B)\leq \frac{V_{R+3}}{v_{1}}\mu(A),\]
and the first statement is proved.

Observe that (iv) results from (ii) and (iii) as a consequence of Remark~\ref{remark:Boundary}.
The proofs of (ii) and (iii) are identical, so we only prove (ii). 
We let $x\in A^{+R}\setminus A$, consider a minimal $1$-geodesic path joining $x$ to $A$ and let $y$ be the last point along that geodesic that remains outside $A$. Since $d(y,A)\leq 1$, necessarily $B(y,1)$ is contained in $\partial_2A$. Now let $\mathbf{B}$ be a maximal set of disjoint $1$-balls contained in $\partial_2 A$. By maximality, $B(y,1)$ must meet a ball  $B$ of $\mathbf{B}$. By triangular inequality, we deduce from $d(x,y)\leq d(x,A)\leq R$ that $x$ belongs to $(R+2)B$. We conclude that 
\[\mu(A^{+R}\setminus A) \leq  \frac{V_{R+2}}{v_{1}}\sum_{B\in \mathbf{B}}\mu(B)\leq \frac{V_{R+2}}{v_{1}}\mu(\partial_2 A).\]
And (ii) is proved. 
\end{proof}

\begin{remark}
Given a graph $Z$ and a set of vertices $S \subset Z$, a common definition of  \emph{boundary} $\partial S$ of $S$ is defined as $S^{+1}\setminus S$. If the graph has bounded degree, it is easy to see  $|\partial S|$, $|\partial_1 S|$ and $|\partial_2 S|$ only differ by a multiplicative constant.
It follows that Fact~\ref{fact:EasyOne} holds in that case for all $R\geq 1$. But this fails in general as shown by the following example: let $X=\bigcup_{n\in \mathbb{Z}} [2n,2n+1]$ equipped with the subspace distance induced from the usual distance on $\mathbb{R}$. Equipped with the Lebesgue measure, this space is a standard metric measure space. But the $1$-boundary of any set of the form $A=[0,2n+1]$ is reduced to $\{-1\}\cup\{0\} \cup\{2n+1\}\cup\{2n+2\}$ and therefore has zero measure, while the $2$-boundary has positive measure. 
\end{remark}

\begin{definition}
Given $C\geq 1$ and $K \geq 0$, a map $f : X \to Y$ between two metric spaces is a \emph{$(C,K)$-quasi-isometry} if
$$\frac{1}{C} \cdot d(x,y) - K \leq d(f(x),f(y)) \leq C \cdot d(x,y) + K \text{ for all $x,y \in X$},$$
and if every point in $Y$ is within $K$ from $f(X)$. 
\end{definition}
\begin{fact}\label{fact:EasyTwo}
Let $X,Y$ be two metric spaces, let $f : X \to Y$ a $(C,K)$-quasi-isometry. For every subset $B\subset Y$, and every $R\geq 0$, 
\begin{itemize}
\item[(i)]   $f^{-1}(B)^{+R}\subset f^{-1}(B^{+(CR+K)});$
\item[(ii)]  $\partial_R f^{-1}(B)\subset f^{-1}(\partial_{CR+K}B).$
\end{itemize}
\end{fact}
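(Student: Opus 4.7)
The plan is to deduce both statements directly from the upper Lipschitz bound $d(f(x),f(y)) \leq C \cdot d(x,y) + K$, with (ii) following from (i) by a complementation argument. Neither the lower bound nor the coarse surjectivity of $f$ will play a role, so in fact the conclusion holds for any $(C,K)$-Lipschitz map.

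For (i), I would pick an arbitrary $x \in f^{-1}(B)^{+R}$ and, by definition of the $R$-neighborhood, choose some $y \in f^{-1}(B)$ with $d(x,y) \leq R$. The Lipschitz bound then yields $d(f(x),f(y)) \leq CR + K$, and since $f(y) \in B$, this places $f(x)$ in $B^{+(CR+K)}$, which is precisely the statement that $x \in f^{-1}(B^{+(CR+K)})$.

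For (ii), the key observation is that preimage commutes with complement: $X \setminus f^{-1}(B) = f^{-1}(Y \setminus B)$. Applying (i) separately to $B$ and to $Y \setminus B$, and then intersecting, I would obtain
\[
\partial_R f^{-1}(B) \;=\; f^{-1}(B)^{+R} \cap (X \setminus f^{-1}(B))^{+R} \;\subset\; f^{-1}\bigl(B^{+(CR+K)}\bigr) \cap f^{-1}\bigl((Y \setminus B)^{+(CR+K)}\bigr),
\]
and pulling the preimage out of the intersection and invoking the definition of the boundary yields the right-hand side $f^{-1}(\partial_{CR+K} B)$.

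There is no genuine obstacle here beyond bookkeeping of the constants: the whole argument is an unwinding of the definitions of $(C,K)$-quasi-isometry, $R$-neighborhood, and $R$-boundary, combined with the elementary set-theoretic identity for preimages of complements.
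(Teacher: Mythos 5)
Your proof is correct and follows exactly the paper's (very terse) argument: (i) is an immediate unwinding of the upper Lipschitz bound, and (ii) is obtained by applying (i) to $B$ and its complement and using that preimages commute with complements and intersections. Nothing further is needed.
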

\begin{proof}
(i) is immediate from the definition of $(C,K)$-quasi-isometry, and (ii) results from (i) applied to $B$ and its complement.
\end{proof}
\begin{fact}\label{fact:EasyOne'}
Let $(X,\mu),(Y,\nu)$ be two standard metric measure spaces, let $f : X \to Y$ a $(C,K)$-quasi-isometry and let $R\geq 2$. There exist $L,L',L''>0$ such that 
\begin{itemize}
\item[(i)] 
for all thick subsets $A\subset X$ and $G\subset Y$,  \[\nu(f(A))\leq L \cdot \mu(A) \;{\text and }\; \mu(f^{-1}(G))\leq L \cdot \nu(G);\]
\item[(ii)] for every measurable subset $G\subset Y$, and every $R\geq 2$, we have
 \[\mu(f^{-1}(\partial_R G))\leq L \cdot \nu(\partial_2 G);\]
 \item[(iii)] for every measurable subset $G\subset Y$, and every $R\geq 2$, we have
 \[\mu(\partial_Rf^{-1}(G))\leq L''\cdot \nu(\partial_2 G).\]
 \end{itemize}
 \end{fact}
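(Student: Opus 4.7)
The plan is to prove the three statements in order, leaning on the bounded geometry covering trick used in Fact \ref{fact:EasyOne} together with the quasi-isometry estimates of Fact \ref{fact:EasyTwo}. The constants $L, L', L''$ will depend only on $C$, $K$, $R$, and the control functions $v_r, V_r$.

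First I would prove (i) by the same maximal disjoint unit-ball argument used in Fact \ref{fact:EasyOne}(i). For $A \subset X$ thick, choose a maximal collection $\mathbf{B}$ of pairwise disjoint unit balls contained in $A$; then $A \subset \bigcup_{B \in \mathbf{B}} 3B$, so $f(A) \subset \bigcup_{B \in \mathbf{B}} f(3B)$. Since $f$ is a $(C,K)$-quasi-isometry, each $f(3B)$ has diameter at most $6C + K$ and is therefore contained in a ball of radius $6C+K$ in $Y$. The bounded geometry of $Y$ bounds the measure of each such ball by $V_{6C+K}$, while $|\mathbf{B}| \leq \mu(A)/v_1$, yielding $\nu(f(A)) \leq (V_{6C+K}/v_1)\mu(A)$. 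The inequality $\mu(f^{-1}(G)) \leq L\nu(G)$ is symmetric: apply the argument to a maximal disjoint family of unit balls in $G$, observing that $f^{-1}(3B)$ has diameter at most $C(6+K)$ by the lower quasi-isometry bound, hence is contained in a ball of radius $C(6+K)$ in $X$.

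For (ii), the subtlety is that $\partial_R G$ need not be thick, so (i) does not apply directly. Instead, choose a maximal family $\mathbf{B}$ of pairwise disjoint unit balls contained in $(\partial_R G)^{+1}$, which lies in $\partial_{R+1} G$ by Remark~\ref{remark:Boundary}. By maximality every point of $\partial_R G$ lies within distance $2$ of some center, so $\partial_R G \subset \bigcup_{B \in \mathbf{B}} 2B$. As in (i), each nonempty $f^{-1}(2B)$ has diameter at most $C(4+K)$ and hence measure at most $V_{C(4+K)}$. Combining $|\mathbf{B}| \leq \nu(\partial_{R+1} G)/v_1$ with Fact \ref{fact:EasyOne}(iv) applied to $G$ gives $\nu(\partial_{R+1} G) \leq (2V_{R+3}/v_1)\nu(\partial_2 G)$, so that
\[ \mu(f^{-1}(\partial_R G)) \;\leq\; \frac{V_{C(4+K)}}{v_1}\,|\mathbf{B}| \;\leq\; \frac{2 V_{C(4+K)} V_{R+3}}{v_1^3}\,\nu(\partial_2 G). \]

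Finally, (iii) follows at once from (ii) and Fact \ref{fact:EasyTwo}(ii): since $\partial_R f^{-1}(G) \subset f^{-1}(\partial_{CR+K} G)$ and $CR+K \geq 2$ when $R \geq 2$, we get $\mu(\partial_R f^{-1}(G)) \leq \mu(f^{-1}(\partial_{CR+K} G)) \leq L'' \nu(\partial_2 G)$ with $L''$ coming from applying (ii) at boundary radius $CR+K$. The main obstacle in this proof is purely the non-thickness of $\partial_R G$ in (ii), which forces the switch to a disjoint family inside $\partial_{R+1} G$ and then to a $2$-ball cover of $\partial_R G$; everything else is bookkeeping with the constants $v_r, V_r$ and the quasi-isometry data.
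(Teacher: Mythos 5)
Your proof is correct and follows essentially the same route as the paper: the maximal-disjoint-unit-ball covering for (i), thickening $\partial_R G$ into $(\partial_R G)^{+1}\subset\partial_{R+1}G$ combined with Fact~\ref{fact:EasyOne}(iv) for (ii), and Fact~\ref{fact:EasyTwo}(ii) for (iii). The only cosmetic difference is that in (ii) you re-run the covering argument by hand where the paper simply applies the already-proved inequality of (i) to the thick set $(\partial_R G)^{+1}$.
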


\begin{proof}
The two inequalities in (i) are proved in the same way so we focus on the second one.
We let $\mathbf{B}$ be a maximal set of disjoint $1$-balls contained in $A$. By maximality of $\mathbf{B}$, every point of $A$ belongs to a $1$-ball intersecting a ball in $\mathbf{B}$, so $A$ must be contained in $\cup_{B\in \mathbf{B}} 3B$. On the other hand the preimage of a ball of radius $3$, if not empty has diameter at most $3C+K$, hence is contained in a ball of radius $6C+K$. We deduce that 
\[\mu(f^{-1}(A))\leq \sum_{B\in \mathbf{B}}\mu(f^{-1}(3B))\leq \frac{v^X_{6C+K}}{v^Y_1}\sum_{B\in \mathbf{B}}\nu(B)\leq  \frac{v^X_{6C+K}}{v^Y_1}\nu(A),\]
and (i) is proved with $L=v^X_{6C+K}/v^Y_1.$

\noindent To prove (ii), apply the second inequality of (i) to $A=(\partial_R G)^{+1}$, use the fact that $A\subset \partial_{R+1} G$ and finally conclude with Fact \ref{fact:EasyOne}(iv). 

\noindent Finally, (iii) follows from the combination of Fact \ref{fact:EasyTwo}(ii) and (ii).
 \end{proof}

Amenability in the sense of Definition \ref{def:amenable} turns out to be invariant under quasi-isometry: indeed quasi-isometries between standard metric measure spaces are large scale equivalences in the sense of \cite{T} and so this follows from \cite[Theorem 8.1]{T} (in the special case where $\phi$ is a positive constant). For completeness we give a short proof below.

\begin{prop}
Amenability is invariant under quasi-isometry among standard metric measure spaces.
\end{prop}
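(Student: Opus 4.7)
The plan is to prove both implications simultaneously by exploiting that a quasi-inverse of a quasi-isometry is itself a quasi-isometry, so it suffices to show: if $Y$ is amenable, then $X$ is amenable. The strategy is to pull back a F\o lner sequence $(A_n)$ of $Y$ to a F\o lner sequence in $X$. The naive candidate $f^{-1}(A_n)$ need not be thick, and its measure may a priori fail to be controlled from below by $\nu(A_n)$, so the correct candidate is
\[
B_n := f^{-1}\bigl(A_n^{+(C+K)}\bigr)^{+1}.
\]
This set is thick by construction (a union of closed $1$-balls) and bounded of finite measure since $A_n$ is.

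The easy half is the upper bound on boundaries. Using that $\partial_R B_n \subset \partial_{R+1}\,f^{-1}(A_n^{+(C+K)})$ (which is a direct consequence of $B_n = f^{-1}(A_n^{+(C+K)})^{+1}$ together with the definition of $\partial_R$), Fact~\ref{fact:EasyOne'}(iii) gives
\[
\mu(\partial_R B_n)\;\leq\; L''\cdot \nu\bigl(\partial_2 A_n^{+(C+K)}\bigr).
\]
Since $\partial_2 A_n^{+(C+K)} \subset A_n^{+(C+K+2)}\setminus A_n$, Fact~\ref{fact:EasyOne}(ii) absorbs the thickening: $\nu(\partial_2 A_n^{+(C+K)}) \leq (V_{C+K+4}/v_1)\,\nu(\partial_2 A_n)$. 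This yields a constant $C_1$ depending only on $C,K$ and the geometry of $X,Y$ such that $\mu(\partial_R B_n)\leq C_1\cdot \nu(\partial_2 A_n)$.

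The harder half, and the main obstacle, is the lower bound $\mu(B_n)\geq C_2\,\nu(A_n)$, because Fact~\ref{fact:EasyOne'} only provides upper bounds on measures of preimages. The plan here is a covering argument by nets. Choose a maximal $1$-separated net $N\subset X$; the half-balls $\bigl(B(n,1/2)\bigr)_{n\in N}$ are pairwise disjoint and contained in $N^{+1/2}$. Coarse surjectivity of $f$ combined with the Lipschitz bound shows that $f(N)$ is $(C+K)$-dense in $Y$, so for every $y\in A_n$ there exists $n_y\in N$ with $d(f(n_y),y)\leq C+K$; in particular $n_y\in f^{-1}(A_n^{+(C+K)})$. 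Hence
\[
A_n\;\subset\;\bigcup_{n\in N\cap f^{-1}(A_n^{+(C+K)})} B(f(n),\,C+K),
\]
and bounded geometry of $Y$ gives $\nu(A_n) \leq V^Y_{C+K}\cdot \bigl|N\cap f^{-1}(A_n^{+(C+K)})\bigr|$. The corresponding disjoint $1/2$-balls sit inside $f^{-1}(A_n^{+(C+K)})^{+1/2}\subset B_n$, so bounded geometry of $X$ yields
\[
\bigl|N\cap f^{-1}(A_n^{+(C+K)})\bigr|\cdot v^X_{1/2} \;\leq\; \mu(B_n),
\]
which rearranges to $\mu(B_n)\geq (v^X_{1/2}/V^Y_{C+K})\,\nu(A_n)$.

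Combining the two estimates gives $\mu(\partial_R B_n)/\mu(B_n)\leq (C_1/C_2)\cdot \nu(\partial_2 A_n)/\nu(A_n)\to 0$, so $(B_n)$ is F\o lner in $X$. Applying the same argument to a quasi-inverse of $f$ establishes the converse implication, completing the proof.
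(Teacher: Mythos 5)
Your argument is correct and is essentially the paper's own proof: both pull back the F\o lner sequence to a set of the form $f^{-1}(A_n^{+\mathrm{const}})^{+1}$, lower-bound its measure by packing it with disjoint balls whose images under $f$ cover $A_n$ at bounded scale, and upper-bound its boundary via Facts~\ref{fact:EasyOne}, \ref{fact:EasyTwo} and~\ref{fact:EasyOne'}. Two harmless constant slips to repair: a maximal $1$-separated net only makes $f(N)$ $(C+2K)$-dense rather than $(C+K)$-dense (so the thickening should be $A_n^{+(C+2K)}$), and the bounded-geometry axiom only supplies $v_r$ for $r\geq 1$, so you should pack with pairwise disjoint balls of radius $1$ (e.g.\ a maximal disjoint family of $1$-balls inside $B_n$, as in the paper) instead of invoking $v^X_{1/2}$.
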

\begin{proof}
Assume that $f : X \to Y$ is a $(C,K)$-quasi-isometry between two standard metric measure spaces $(X,\mu)$ and $(Y,\nu)$, and let $G_n$ be a F\o lner sequence of $Y$. We claim that $A_n:= f^{-1}(G_n^{+K})^{+1}$ defines a F\o lner sequence of $X$. 

Let $\mathbb{B} \subset A_n$ be such that $\{B(b,1) \mid b \in \mathbb{B} \}$ is a maximal collection of pairwise disjoint balls in $A_n$. We have
$$\mu(A_n) \geq \sum\limits_{b \in \mathbb{B}} \mu(B(b,1)) \geq c \cdot \sum\limits_{b \in \mathbb{B}} \nu(B(f(b),3C+2K)) \geq c \cdot \nu \left( \bigcup\limits_{b \in \mathbb{B}} B(f(b),3C+2K) \right)$$
where $c:= v^X_1 / V^Y_{3C+2K}$. Observe that $G_n \subset \bigcup_{b \in \mathbb{B}} B(f(b),3C+2K)$. Indeed, if $g \in G_n$ then there exists some $a \in X$ such that $d(f(a),g)) \leq K$. Necessarily, $a$ belongs to $A_n$, and there must exist some $b \in \mathbb{B}$ such that $B(a,1)$ and $B(b,1)$ intersect by maximality of our collection, hence $d(a,b) \leq 3$. So we have
$$d(f(b),g) \leq d(f(b),f(a)) + d(f(a),g) \leq 3C+2K,$$
i.e. $g \in B(f(b),3C+2K)$. We conclude that 
\begin{equation}\label{eq:subset}
\mu(A_n)\geq c \cdot \mu(G_n).
\end{equation}
Next, observe that $A_n \subset f^{-1}(G_n^{+(C+2K)})$ according to Fact~\ref{fact:EasyTwo}(i), so we deduce from Fact \ref{fact:EasyOne'}(iii) that there exists a constant $Q$ such that 
\[\mu(\partial_2 A_n)\leq Q \cdot \nu(\partial_2 G_n^{+(C+2K)}). \]
But $\partial_2 G_n^{+(C+2K)}\subset \partial_{C+2K+2}G_n$, so we deduce from Fact \ref{fact:EasyOne}(iv) that there exists a constant $Q'$ such that
 \begin{equation}\label{eq:partialsebset}
\mu(\partial_2A_n)\leq Q' \cdot \mu(\partial_2 G_n).
\end{equation}
We deduce from (\ref{eq:subset}) and (\ref{eq:partialsebset}) that $(A_n)$ is a F\o lner sequence, as required.
\end{proof}

\section{Scaling quasi-isometries}\label{section:QuasiToOne}

We are now ready to define measure-scaling quasi-isometries between standard metric measure spaces. Beyond providing this definition, we shall prove in this section that measure-scaling quasi-isometries are stable under composition.  
\begin{definition}\label{def:measurekappa}
Let $\kappa>0$. A measurable map between measure spaces $f:(X,\mu)\to (Y,\nu)$ is \emph{measure $\kappa$-to-one} if $f\ast \mu= \kappa\nu$: in other words $\mu(f^{-1}(A))=\kappa \nu(A)$ for all measurable subsets $A$.
\end{definition}
\noindent If the measures on $X$ and $Y$ are the counting measures, then $\kappa$ must be an integer, and being measure $\kappa$-to-one amounts to being surjective and $\kappa$-to-one in the usual sense: the pre-image a point has size $\kappa$.
Examples of measure $\kappa$-to-one map are isomorphisms between locally compact groups equipped with left Haar measures.
Our goal is to give an approximate version of being measure $\kappa$-to-one for metric measure spaces.
\begin{definition}\label{def:quasione}
Let $f:(X,\mu)\to (Y,\nu)$ be a measurable map between two standard metric measure spaces $X,Y$ and let $\kappa>0$. Then $f$ is \emph{quasi-$\kappa$-to-one} if there exists a constant $C>0$ such that for all thick subspace $A\subset Y$, we have
\[\left| \kappa \nu(A)- \mu(f^{-1}(A)) \right|\leq C\nu(\partial_2 A).\]
A quasi-isometry between  standard metric measure spaces is called a \emph{measure-scaling quasi-isometry of (measure-)scaling factor} $\kappa$ if it is quasi-$\kappa$-to-one.
\end{definition}

\noindent 
We emphasize that it is not necessary to consider the $2$-boundary $\partial_2$ in the above definition, but this can be done without loss of generality according to Fact~\ref{fact:EasyOne}. Obvious examples of quasi-$\kappa$-to-one maps are measure $\kappa$-to-one maps.

\begin{remark}
 By a result of Whyte, any quasi-one-to-one quasi-isometry between bounded degree graphs lies at bounded distance from a bijective quasi-isometry  \cite[Theorems~A and~C]{MR1700742} (see also \cite[Theorems~5.3 and~5.4]{MR2730576}). We shall see a simple generalization of this fact when $\kappa$ is rational (see \S \ref{section:graphs}).
\end{remark}

\begin{remark}\label{rem:scalingNonAmen}
Note that when $X$ is non-amenable, Fact \ref{fact:EasyOne'} shows that this condition is essentially empty: any quasi-isometry being quasi-$\kappa$-to-one for any $\kappa>0$. This is in sharp contrast with the amenable case as shown by the following lemma.
\end{remark}

\begin{lemma}\label{lem:KappaWellDefined}
Let $f : (X,\mu) \to (Y,\nu)$ be a measurable map between two standard metric measure spaces. Assume that $X$ is amenable. If $f$ is both quasi-$\kappa_1$-to-one and quasi-$\kappa_2$-to-one for some $\kappa_1,\kappa_2>0$, then $\kappa_1=\kappa_2$.
\end{lemma}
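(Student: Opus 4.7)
The plan is to subtract the two quasi-$\kappa_i$-to-one inequalities and exploit the amenability of $X$ to force the remaining error to vanish. First, I would apply the triangle inequality to the two defining bounds
\[
|\kappa_i \nu(A) - \mu(f^{-1}(A))| \leq C_i \nu(\partial_2 A), \quad i = 1, 2,
\]
obtaining, for every thick subset $A \subset Y$,
\[
|\kappa_1 - \kappa_2| \cdot \nu(A) \leq (C_1 + C_2) \cdot \nu(\partial_2 A).
\]
The lemma will then follow if I can exhibit a sequence of thick subsets $(G_n) \subset Y$ satisfying $\nu(\partial_2 G_n)/\nu(G_n) \to 0$, since plugging $G_n$ into the displayed inequality and letting $n \to \infty$ forces $|\kappa_1 - \kappa_2| = 0$.

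Next, I would produce such a F\o lner-type sequence in $Y$ by transferring amenability from $X$. In the applications of the lemma, $f$ is (implicitly) a quasi-isometry, so the quasi-isometry invariance of amenability proved in the preceding proposition, applied to a quasi-inverse of $f$, yields that $Y$ is itself amenable and hence admits a F\o lner sequence $(G_n)$ of thick sets. Feeding these $G_n$ into the combined inequality and dividing by $\nu(G_n)$ closes the argument.

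The main obstacle is exactly this transfer of amenability: the hypothesis sits on the source $X$ while the subtracted inequality lives on the target $Y$. Once $f$ is a quasi-isometry the previous proposition handles the transfer cleanly, but under the bare measurable-map assumption the combined inequality is consistent with a non-amenable $Y$ (for instance, the isometric embedding $n \mapsto a^n$ from $\mathbb{Z}$ into the free group $F_2$ is quasi-$\kappa$-to-one for every $\kappa > 0$, as one checks on balls). Hence the proof must use---at least implicitly---quasi-isometric regularity of $f$ in order to set up the F\o lner transfer, and this is the step I expect to require the most care.
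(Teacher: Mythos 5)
Your argument is essentially the paper's: the published proof also evaluates the two quasi-$\kappa_i$-to-one inequalities along a F\o lner sequence and lets the boundary terms kill the error, which is exactly your subtraction step. Your worry about where the amenability hypothesis lives is well founded, and in fact it points at a slip in the paper itself: the proof there begins ``let $(A_n)$ be a F\o lner sequence in $X$'' and then immediately forms $f^{-1}(A_n)$ and $\nu(A_n)$, which only make sense for subsets of $Y$, so it is implicitly using a F\o lner sequence in $Y$. Your example of the isometric embedding $n \mapsto a^n$ of $\mathbb{Z}$ into $F_2$ is correct and shows the hypothesis cannot stay on $X$ for a bare measurable map: since $|f^{-1}(A)| \leq |A| \lesssim |\partial_2 A|$ by non-amenability of $F_2$, that map is quasi-$\kappa$-to-one for every $\kappa>0$ even though $\mathbb{Z}$ is amenable. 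In the paper's applications $f$ is a quasi-isometry, so amenability transfers to $Y$ by the preceding proposition, exactly as you say; with that (necessary) reading of the hypothesis your proof is complete and matches the paper's.
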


\begin{proof}
Let $(A_n)$ be a F\o lner sequence in $X$. It follows from the definition of being quasi-$\kappa_1$-to-one that $\mu(f^{-1}(A_n)) / \nu(A_n) \to \kappa_1$ as $n \to + \infty$. Similarly, one gets $\mu(f^{-1}(A_n)) / \mu(A_n) \to \kappa_2$ as $n \to + \infty$, hence $\kappa_1=\kappa_2$. 
\end{proof}

\noindent
Our next observation shows how being quasi-$\kappa$-to-one behaves under composition.  

\begin{prop}\label{prop:EasyKappa}
Let $(X,\mu),(Y,\nu),(Z,\sigma)$ be three standard metric measure spaces, $\kappa_1,\kappa_2>0$ two real numbers, and $f,h:X\to Y$ and $g:Y\to Z$  three quasi-isometries. 
\begin{itemize}
\item[(i)] If $f, h$ are at bounded distance and if $f$ is quasi-$\kappa_1$-to-one, then $h$ is also quasi-$\kappa_1$-to-one.
\item[(ii)] If $f$ and $g$ are respectively quasi-$\kappa_1$-to-one and quasi-$\kappa_2$-to-one, then $g\circ f$ is quasi-$\kappa_1\kappa_2$-to-one. 
\item[(iii)] If $\bar{f}$ is a quasi-inverse of $f$ and if $f$ is quasi-$\kappa_1$-to-one, then $\bar{f}$ is quasi-$(1/\kappa_1)$-to-one.  
\end{itemize}
\end{prop}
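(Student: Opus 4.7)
My plan is to prove the three parts in order, systematically reducing every error to a multiple of a $\partial_2$-measure via Facts \ref{fact:EasyOne} and \ref{fact:EasyOne'}.

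For (i), let $D$ bound the distance between $f$ and $h$ and fix a thick $A \subset Y$. The key observation is that any $x \in f^{-1}(A) \triangle h^{-1}(A)$ satisfies $f(x) \in \partial_{\max(D,2)} A$, since $f(x)$ and $h(x)$ lie within $D$ of each other on opposite sides of $\partial A$. Fact \ref{fact:EasyOne'}(ii) then yields $\mu(f^{-1}(A) \triangle h^{-1}(A)) \leq L\, \nu(\partial_2 A)$, and the quasi-$\kappa_1$ estimate for $h$ follows from the one for $f$ by the triangle inequality.

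For (ii), the central obstacle is that, for a thick $B \subset Z$, the set $g^{-1}(B) \subset Y$ need not be thick, so the quasi-$\kappa_1$ condition on $f$ does not apply directly to it. My plan is to replace $g^{-1}(B)$ by its $+1$-thickening $T := g^{-1}(B)^{+1}$, which is thick, and chain two triangle inequalities: one from the quasi-$\kappa_1$ estimate applied to $T$, the other from $\kappa_1$ times the quasi-$\kappa_2$ estimate for $g$ applied to $B$. The three residual errors, namely $|\nu(T) - \nu(g^{-1}(B))|$, $|\mu(f^{-1}(T)) - \mu(f^{-1}(g^{-1}(B)))|$ and $\nu(\partial_2 T)$, are each dominated by $\sigma(\partial_2 B)$: the first directly, using $T \setminus g^{-1}(B) \subset \partial_2 g^{-1}(B)$ and Fact \ref{fact:EasyOne'}(iii) applied to $g$; the second via Fact \ref{fact:EasyOne'}(ii) applied to $f$ and the same bound; and the third via the inclusion $\partial_2(S^{+1}) \subset \partial_3 S$ together with Facts \ref{fact:EasyOne}(iv) and \ref{fact:EasyOne'}(iii).

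For (iii), let $D$ satisfy $d(\bar{f} \circ f, \mathrm{id}_X) \leq D$, fix a thick $A \subset X$, and set $B := \bar{f}^{-1}(A)$. The geometric crux is the sandwich
\[ A^{-D} \subset f^{-1}(B) \subset A^{+D}, \qquad \text{where } A^{-D} := \{x \in A : d(x, X \setminus A) > D\}, \]
which follows from $\bar{f}(f(x))$ lying within $D$ of $x$. Since $A^{+D} \setminus A^{-D} \subset \partial_D A$, Fact \ref{fact:EasyOne}(iv) gives $\mu(A \triangle f^{-1}(B)) \leq C\, \mu(\partial_2 A)$. I would then apply the quasi-$\kappa_1$ hypothesis to the thickening $B^{+1}$ and control all remaining errors as in (ii), this time converting $Y$-boundaries of $B$ to $X$-boundaries of $A$ via Fact \ref{fact:EasyOne'}(iii) applied to $\bar{f}$. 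The resulting inequality $|\kappa_1 \nu(B) - \mu(A)| \leq C' \mu(\partial_2 A)$, divided by $\kappa_1$, gives exactly the quasi-$(1/\kappa_1)$ estimate for $\bar{f}$.

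The main obstacle, present in both (ii) and (iii), is the bookkeeping required to accommodate the thickness hypothesis of Definition \ref{def:quasione}: one must systematically replace non-thick preimages by their $+1$-thickenings, then absorb every resulting error term through the quantitative estimates of Section \ref{sec:MMspaces}, while also converting boundaries in one space into boundaries in the other via Fact \ref{fact:EasyOne'}(iii).
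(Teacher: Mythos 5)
Your proposal is correct and follows essentially the same strategy as the paper's proof: in each part, split the error by the triangle inequality and absorb every residual term into a multiple of $\nu(\partial_2 A)$ (resp.\ $\sigma(\partial_2 B)$, $\mu(\partial_2 A)$) using Facts \ref{fact:EasyOne} and \ref{fact:EasyOne'}. The only substantive deviations are local: in (ii) you explicitly thicken $g^{-1}(B)$ to $g^{-1}(B)^{+1}$ before invoking the quasi-$\kappa_1$ hypothesis on $f$, whereas the paper applies that hypothesis to $g^{-1}(A)$ directly without checking thickness --- your version is the more careful one, and the extra error terms are correctly absorbed; and in (iii) you apply the hypothesis to $\bar{f}^{-1}(A)^{+1}$ and use the sandwich $A^{-D}\subset f^{-1}(\bar{f}^{-1}(A))\subset A^{+D}$, where the paper instead routes the comparison through the intermediate set $f(A)^{+K}$. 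These are bookkeeping variants of the same argument, and both yield the stated constants.
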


\noindent

\begin{proof}[Proof of Proposition \ref{prop:EasyKappa}.]
To avoid clumsy expressions, we shall adopt the following useful notation: $F\lesssim_{\alpha_1,\alpha_2,\ldots}G$ means that there exists $C>0$ only depending on the parameters $\alpha_1,\alpha_2,\ldots$ such that $F\leq CG$.

\noindent We start proving (i).
First, assume that $f,h$ are at distance $\leq Q$, and that $f$ is quasi-$\kappa_1$-to-one, i.e. there exists some $C \geq 0$ such that 
$$\left | \kappa_1\nu(A) -  \mu(f^{-1}(A)) \right| \leq C \cdot \nu(\partial_2 A ) \text{ for all  finite measure thick subset $A \subset Y$.}$$
Observe that, because the distance between $f$ and $h$ is $\leq Q$, we have $f^{-1}(B) \subset h^{-1}(B^{+Q})$ and $h^{-1}(B) \subset f^{-1}(B^{+Q})$ for every  finite measure thick subset  $B \subset Y$. This observation will be used several times in the sequel. Given a finite measure thick subset $A \subset Y$, we have
\begin{equation}\label{eq:Quasi}
\left| \kappa_1 \nu(A) - \mu(h^{-1}(A)) \right| \leq \left| \kappa_1 \nu(A) -  \mu(f^{-1}(A)) \right| +  \left| \mu(f^{-1}(A)) -\mu(h^{-1}(A)) \right|.
\end{equation}
In the right hand side, we know how to control the left term because $f$ is quasi-$\kappa_1$-to-one, so it remains to control the right term. On the one hand, we have
$$\begin{array}{lcl}\mu(h^{-1}(A))-\mu(f^{-1}(A)) & \leq & \mu(f^{-1}(A^{+Q}))- \mu(f^{-1}(A)) \leq \mu \left( f^{-1}(A^{+Q}) \backslash f^{-1}(A) \right) \\ \\ & \leq & \mu\left( f^{-1}(A^{+Q} \backslash A) \right)\leq \mu(f^{-1}(\partial_Q A))\lesssim_{Q,f}  \nu(\partial_2 A). \end{array}$$
where the last inequality is a consequence of Fact \ref{fact:EasyOne'}(ii).

 On the other hand, we have similarly
$$\begin{array}{lcl} \mu(f^{-1}(A)) -\mu(h^{-1}(A)) & \leq & \mu(h^{-1}(A^{+Q}))-\mu(h^{-1}(A)) \leq \mu\left( h^{-1}(A^{+Q} \backslash A )\right)\lesssim_{Q,h}  \nu(\partial_2 A). \end{array}$$

Thus, we have proved that $\left| \mu(f^{-1}(A)) -\mu(h^{-1}(A)) \right| \lesssim_{f,g}\nu(\partial_2 A)$. Combining this observation with the inequality (\ref{eq:Quasi}), we get
$$\left| \kappa_1 \nu(A) -\mu(h^{-1}(A)) \right| \lesssim_{f,h} \nu(\partial_2 A).$$
This concludes the proof that $h$ is quasi-$\kappa_1$-to-one.

\medskip \noindent
Next we prove (ii). Assume that $f$ and $g$ are respectively quasi-$\kappa_1$-to-one and quasi-$\kappa_2$-to-one. So there exist $P,Q \geq 0$ such that
$$\left| \kappa_1\nu(A) -  \mu(f^{-1}(A)) \right| \leq P \cdot \nu(\partial_2 A )\text{ and } \left| \kappa_2 \sigma(B) -  \nu(g^{-1}(B)) \right| \leq Q \cdot \sigma(\partial_2 B)$$
for all finite measure thick subsets $A \subset Y$, $B \subset Z$. Given a finite measure thick subset $A \subset Z$, we have
$$\begin{array}{lcl} \displaystyle \left| \kappa_1\kappa_2\sigma(A) - \mu((g \circ f)^{-1}(A)) \right| & \leq & \displaystyle \kappa_2 \left| \kappa_1 \sigma(A) -  \nu(g^{-1}(A)) \right|+ \left| \kappa_2 \nu(g^{-1}(A)) - \mu(f^{-1}(g^{-1}(A))) \right| \\ \\ & \leq & \displaystyle P\kappa_2 \cdot \sigma(\partial_2 A) + Q \cdot \nu( \partial_2 g^{-1}(A)) \end{array}$$
As a consequence of Fact \ref{fact:EasyOne'}(iii), $\nu(\partial_2 g^{-1}(A)) \lesssim_g \sigma(\partial_2 A)$. Therefore,
$$\left| \kappa_1\kappa_2\sigma(A) -  \mu((g \circ f)^{-1}(A)) \right| \lesssim_{f,g} \sigma(\partial_2 A),$$
concluding the proof that $f\circ g$ is quasi-$\kappa_1\kappa_2$-to-one.

\medskip \noindent
We turn to the proof of (iii). Assume that $f$ is quasi-$\kappa_1$-to-one, i.e. there exists $Q \geq 0$ such that
$$\left| \kappa_1 \nu(A) - \mu(f^{-1}(A)) \right| \leq Q \cdot \nu(\partial_2 A) \text{ for all finite measure thick subset $A \subset Y$},$$
and let $\bar{f}$ be a quasi-inverse of $f$. We fix two constants $C,K \geq 0$ such that $f, \bar{f}$ are $(C,K)$-quasi-isometries and $\bar{f}\circ f, f \circ \bar{f}$ are within $K$ from identities. Given a finite measure thick subset $A \subset X$, observe that
$$\bar{f}^{-1}(A) \subset f(A)^{+K} \subset f(A)^{+K+1} \subset \bar{f}^{-1}(A^{+C(3K+1)}).$$
For the first inclusion, notice that, given an $x \in \bar{f}^{-1}(A)$, we have $d(x,f(\bar{f}(x))) \leq K$ where $\bar{f}(x) \in A$, hence $d(x,f(A)) \leq K$. For the third inclusion, notice that, given an $x \in f(A)^{+K+1}$, there exists some $a \in A$ such that $d(x,f(a)) \leq K+1$, hence
$$\begin{array}{lcl} d(\bar{f}(x),a) & \leq & C d(f(\bar{f}(x)), f(a)) +CK \leq C (d(f(\bar{f}(x)),x)+d(x,a)) +CK \\ \\ & \leq & C( K+ K+1)+CK, \end{array}$$
i.e. $x \in \bar{f}^{-1}(A^{+C(3K+1)})$. From these inclusions, we deduce that
\begin{equation}\label{equationOne}
\left| \nu(\bar{f}^{-1}(A)) - \nu(f(A)^{+K}) \right| \leq \nu\left( \bar{f}^{-1} \left( A^{C(3K+1)} \backslash A \right) \right)\leq\nu(\partial_{C(3K+1)}\bar{f}^{-1}(A)) \lesssim_{C,K}\mu(\partial_2 A),
\end{equation}
where the last inequality follows from Fact \ref{fact:EasyOne'}(iii). 
Note that \[\partial_2 f(A)^{+K}\subset \partial_{C(3K+1)+2}\bar{f}^{-1}(A).\] Hence applying once again Fact \ref{fact:EasyOne'}(iii), we obtain
\begin{equation}\label{EquationTwo}
\nu\left(\partial_2 \left( f(A)^{+K} \right) \right)  \lesssim_{C,K} \mu(\partial_2 A).
\end{equation}
Next, observe that
$$ A \subset f^{-1} \left( f(A)^{+K} \right) \subset A^{+2CK}.$$
The first inclusion is clear. For the second inclusion, notice that, given an $x \in f^{-1}(f(A)^{+K})$, there exists some $a \in A$ such that $d(f(x),f(a)) \leq K$, hence $d(x,a) \leq C d(f(x),f(a)) +CK \leq 2CK$, i.e. $x \in A^{+2CK}$. We deduce from these inclusions that
\begin{equation}\label{equationThree}
\left| \mu(A) -\mu( f^{-1} ( f(A)^{+K}) ) \right| \leq \mu\left( A^{+2CK}\backslash A \right) \lesssim_{C,K} \mu(\partial_2 A),
\end{equation}
where we have used Facts \ref{fact:EasyOne}(iii). We have:
\begin{eqnarray*}
\left| \frac{1}{\kappa_1} \mu(A) - \nu(f(A)^{+K})\right| & \leq & \frac{1}{\kappa_1} \left| \mu(A) -\mu( f^{-1} ( f(A)^{+K}) ) \right|  + \left| \frac{1}{\kappa_1} \mu( f^{-1} ( f(A)^{+K}) )- \nu(f(A)^{+K})\right| \\
& \lesssim_{C,K} &\mu(\partial_2 A)+ \nu(\partial_2f(A)^{+K})) \\
& \lesssim_{C,K}& \mu(\partial_2 A),
\end{eqnarray*}
where the second inequality follows from  (\ref{equationThree}), and from our assumption on $f$, while the third inequality results from  (\ref{EquationTwo}).
So finally, combining the latter inequality and equation (\ref{equationOne}), we have

\begin{eqnarray*}
\left| \frac{1}{\kappa_1} \mu(A) - \nu(\bar{f}^{-1}(A)) \right| & \leq & \left| \frac{1}{\kappa_1} \mu(A) - \nu(f(A)^{+K})\right| +\left| \nu(f(A)^{+K})-\nu(\bar{f}^{-1}(A))\right|\\
& \lesssim_{C,K} &  \mu(\partial_2 A),
\end{eqnarray*}
concluding the proof of the fact that $\bar{f}$ is quasi-$(1/\kappa_1)$-to-one.
\end{proof}

\section{The case of graphs and rational  scaling factors}\label{section:graphs}
We now focus on graphs, for which we have the following important result due to Whyte.
\begin{prop}\label{prop:QIdistBij}
Let $f:X_1\to X_2$ be a quasi-isometry between two graphs with bounded degree. Then $f$ is at bounded distance from a bijection if and only if it is quasi-one-to-one.
\end{prop}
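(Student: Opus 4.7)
My plan is to prove the two implications separately; the ``only if'' direction is routine from bounded degree, while the ``if'' direction is the substantive part and will require the uniformly finite homology machinery of Block--Weinberger.

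For the ``only if'' direction, suppose $\sigma:X_1\to X_2$ is a bijection with $d(f,\sigma)\leq K$. Given a finite $A\subset X_2$, any $x$ in the symmetric difference $f^{-1}(A)\triangle\sigma^{-1}(A)$ satisfies $\sigma(x)\in\partial_K A$, since $\sigma(x)$ must be within $K$ of both $A$ and its complement. Hence $|f^{-1}(A)\triangle\sigma^{-1}(A)|\leq|\partial_K A|\leq C|\partial A|$ by bounded degree. Because $|\sigma^{-1}(A)|=|A|$, the triangle inequality yields $\bigl||A|-|f^{-1}(A)|\bigr|\leq C|\partial A|$, as required.

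For the converse, the strategy is to realize the desired bijection as a perfect matching in an auxiliary bipartite graph. Choose $R$ large enough that $f(X_1)$ is $R$-dense in $X_2$, and define $\Gamma=(X_1\sqcup X_2,E)$ by declaring $\{x,y\}\in E$ whenever $d(f(x),y)\leq R$. Bounded degree combined with the quasi-isometry property ensures that $\Gamma$ is locally finite with uniformly bounded degrees on both sides, and any perfect matching of $\Gamma$ produces a bijection $\sigma:X_1\to X_2$ with $d(f,\sigma)\leq R$, which is the conclusion. By the Hall--Rado marriage theorem for locally finite bipartite graphs, a perfect matching exists iff $|N_\Gamma(S)|\geq|S|$ for every finite subset $S$ of either side.

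The main obstacle is to verify Hall's condition. For a finite $A\subset X_1$, one has $N_\Gamma(A)=f(A)^{+R}$ and $A\subset f^{-1}(N_\Gamma(A))$, so the quasi-one-to-one hypothesis only yields
$$|A|\leq|f^{-1}(N_\Gamma(A))|\leq|N_\Gamma(A)|+C|\partial N_\Gamma(A)|,$$
which is Hall's inequality up to an uncontrolled boundary defect. To absorb this defect I would invoke the homological reformulation of being quasi-one-to-one from \cite{BW,MR1700742}: the hypothesis is precisely equivalent to $f_\ast[X_1]=[X_2]$ in the uniformly finite homology $H_0^{uf}(X_2)$, which furnishes a uniformly finite $1$-chain $c$ with $\partial c=[X_2]-f_\ast[X_1]$. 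This chain encodes a bounded integer flow on $X_2$, which one then uses as a blueprint to iteratively repair the local defects of the naive assignment $x\mapsto f(x)$ by augmenting along the support of $c$, producing a bijection at bounded distance from $f$. The verification of Hall's condition on the $X_2$ side is symmetric, using the quasi-inverse. The hardest step is this global rearrangement, and it is here that both the bounded-degree hypothesis (to control $|\partial N_\Gamma(A)|$) and the full strength of the Block--Weinberger uniformly finite chain (to absorb the global boundary error) are indispensable.
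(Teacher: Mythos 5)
Your ``only if'' direction is correct and complete: the symmetric-difference argument giving $|f^{-1}(A)\,\triangle\,\sigma^{-1}(A)|\leq|\partial_K A|\leq C|\partial A|$ is exactly right, and is in fact more explicit than the paper, which subsumes both directions into a single citation.

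The ``if'' direction, however, has a genuine gap. You set up the bipartite graph $\Gamma$ and correctly observe that the quasi-one-to-one hypothesis only yields Hall's inequality up to an error term $C|\partial N_\Gamma(S)|$, so the marriage theorem does not apply as stated. Your proposed repair is to pass to the homological reformulation $f_\ast[X_1]=[X_2]$ in $H_0^{uf}(X_2)$ and then ``iteratively repair the local defects \dots\ by augmenting along the support of $c$.'' That sentence is not an argument: converting a uniformly finite $1$-chain $c$ with $\partial c=[X_2]-f_\ast[X_1]$ into an actual bijection at bounded distance from $f$ is precisely the content of Whyte's Theorem A in \cite{MR1700742}, and it is the nontrivial step of the whole proposition; you neither carry it out nor cite it, but only describe what such a proof would do. Moreover, once you grant yourself the equivalence between the quasi-one-to-one estimate and $f_\ast[X_1]=[X_2]$ (Whyte's Theorem C, building on \cite{BW}), the entire Hall's-theorem scaffolding becomes redundant: Theorem A of \cite{MR1700742} then produces the bijection directly, which is exactly how the paper proves Proposition \ref{prop:QIdistBij} --- it recalls the definition of $H_0^{uf}$ and combines Theorems A and C of \cite{MR1700742}. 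So either you must actually execute the flow-augmentation/matching argument (essentially reproving Whyte's Theorem A), or you should drop the bipartite graph and state the two citations cleanly; as written, the hard implication is asserted rather than proved.
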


\begin{proof}
We briefly recall the definition of the first uniformly finite homology group $H_0^{uf}$ (originally introduced by Block and Weinberger in \cite{BW}). 
For a bounded degree graph $X$, let $C_0^{uf}$ denote the abelian group of formal sums of the form
\[c = \sum_{x\in X}a_x x\]
where $a_x$ is a bounded family of integers.
 Let $C_1^{uf}$ be abelian group of formal sums
\[c = \sum_{(x,y)\in X\times X}a_{x,y} (x,y),\]
where $a_{x,y}$ is again a bounded family of integers that are zero except in a bounded neighborhood of the diagonal.
The
boundary map $\partial: C_1^{uf}\to C_0^{uf}$ is defined as usual as  
$\partial (x,y)=y-x.$ We denote by $H_0^{uf}=C_0^{uf}(X)/\partial(C_1^{uf}(X))$ the corresponding zeroth homology group.
The fundamental class of $X$ is denoted by $[X]=\sum_{x\in X}x$.

\medskip \noindent
By \cite[Theorem A]{MR1700742}, $f$ is at bounded distance from a bijection if and only if $f_*([X_1])=[X_2]$ in $H_0^{uf}(X_2)$, where
$f_*([X_1])=\sum_{x\in X_2} |f^{-1}(x)|x$. In other words, this condition says that 
$c=\sum_{x\in X_2} (|f^{-1}(x)|-1)x$ satisfies $[c]=0$ in $H_0^{uf}(X_2)$, which amounts to saying, according to \cite[Theorem C]{MR1700742}, that there exists a constant $C>0$ such that, for all finite subset $A\subset X_2$,
\[\left|\sum_{x\in A}c_x\right|\leq C|\partial A|.\]
This concludes the proof of our proposition.
\end{proof}

\noindent
In case $\kappa$ is rational, we have the following equivalent formulations of Definition~\ref{def:quasione}.

\begin{prop}\label{prop:kappa}
Let $m,n \geq 1$ be natural integers and $f:X\to Y$ a quasi-isometry between two graphs with bounded degree. The following statements are equivalent:
\begin{itemize}
\item[(i)] $f$ is quasi-$(m/n)$-to-one;
\item[(ii)] the map $\iota \circ f\circ \pi$ is at bounded distance from a bijection, where $\pi : X\times \mathbb{Z}/n\mathbb{Z} \twoheadrightarrow X$ is the canonical embedding and $\iota : Y \hookrightarrow Y\times \mathbb{Z}/m \mathbb{Z}$ the canonical projection.
\item[(iii)] there exist a partition $\mathcal{P}_X$ (resp. $\mathcal{P}_Y$) of $X$ (resp. of $Y$) with uniformly bounded pieces of size $m$ (resp. $n$) and a bijection $\psi:\mathcal{P}_X\to \mathcal{P}_Y$ such that $f$ is at bounded distance from a map $g : X \to Y$ satisfying $g(P) \subset \psi(P)$ for every $P \in \mathcal{P}_X$.
\end{itemize}
\end{prop}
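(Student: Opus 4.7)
The plan is to prove $(\text{i}) \Leftrightarrow (\text{ii})$ by reducing to Whyte's theorem (Proposition~\ref{prop:QIdistBij}) via the composition rules of Proposition~\ref{prop:EasyKappa}, and then to prove $(\text{ii}) \Leftrightarrow (\text{iii})$ by interpreting the bijection supplied by Whyte's theorem as an explicit partition-matching.

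For $(\text{i}) \Leftrightarrow (\text{ii})$: the projection $\pi : X \times \mathbb{Z}/n\mathbb{Z} \to X$ is honestly measure-$n$-to-one since $|\pi^{-1}(A)| = n|A|$ for every $A \subset X$; the embedding $\iota : Y \to Y \times \mathbb{Z}/m\mathbb{Z}$ sending $y$ to $(y,0)$ is quasi-$(1/m)$-to-one, because the deviation of $|\iota^{-1}(A)| = |\{y:(y,0)\in A\}|$ from $|A|/m$ is controlled by the number of ``inhomogeneous'' fibers $\{y\}\times\mathbb{Z}/m\mathbb{Z}$ meeting both $A$ and its complement, which is bounded by $|\partial A|$. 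Applying Proposition~\ref{prop:EasyKappa}(ii), $f$ is quasi-$(m/n)$-to-one if and only if $\iota \circ f \circ \pi$ is quasi-one-to-one, which by Proposition~\ref{prop:QIdistBij} is equivalent to being at bounded distance from a bijection. The backwards direction also uses Proposition~\ref{prop:EasyKappa}(iii) to reconstruct $f$ (up to bounded distance) as $\bar{\iota}\circ(\iota\circ f\circ\pi)\circ\bar{\pi}$ for quasi-inverses $\bar\pi,\bar\iota$, whose scaling factors $1/n$ and $m$ multiply with the scaling factor $1$ of the middle map to recover $m/n$.

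For $(\text{iii}) \Rightarrow (\text{i})$: Proposition~\ref{prop:EasyKappa}(i) lets us replace $f$ by $g$. Since $\mathcal{P}_Y$ is a partition of $Y$ and $g$ maps each $P \in \mathcal{P}_X$ into the disjoint piece $\psi(P)$, the identity $g^{-1}(\psi(P)) = P$ holds exactly. For a thick $A \subset Y$, classify the pieces of $\mathcal{P}_Y$ as \emph{interior} ($\psi(P) \subset A$, contributing exactly $m$ to $|g^{-1}(A)|$ and $n$ to $|A|$) or \emph{boundary} ($\psi(P)$ meets both $A$ and its complement, contributing at most $m$ and $n$ respectively). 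The interior terms together give exactly $(m/n)$ times the interior mass, and the residual is controlled by the number of boundary pieces; uniform bounds on piece diameters show that every boundary piece corresponds to a point of $\partial A$ within bounded distance, so the total error is $O(|\partial A|)$.

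The main difficulty is $(\text{ii}) \Rightarrow (\text{iii})$. Starting from a bijection $F : X \times \mathbb{Z}/n\mathbb{Z} \to Y \times \mathbb{Z}/m\mathbb{Z}$ at bounded distance from $\iota \circ f \circ \pi$, I would form the bipartite multigraph $\tilde{H}$ on $X \sqcup Y$ carrying one edge between $x$ and $y$ for each pair $(i,j)$ with $F(x,i) = (y,j)$; it is $(n,m)$-biregular with uniformly short edges. The task is to decompose $\tilde{H}$ into finite ``blocks'' each containing exactly $m$ vertices of $X$ and $n$ of $Y$ with uniformly bounded diameter, after which $\mathcal{P}_X, \mathcal{P}_Y$, the bijection $\psi$, and the map $g$ are read off directly. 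The obstacle is that connected components of $\tilde{H}$ can be unbounded, so one has to cut them along short edges while respecting the $(m,n)$-counting constraint. This is achieved by a Hall-type matching argument applied to bounded neighborhoods in $\tilde{H}$, the biregularity guaranteeing the existence of the required local matchings, and the bounded edge-length of $F$ ensuring that the resulting map $g$ remains within bounded distance of $f$.
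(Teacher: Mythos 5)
Your treatment of $(i)\Leftrightarrow(ii)$ and of $(iii)\Rightarrow(i)$ is correct and essentially the route taken in the paper: you compute the scaling factors of $\pi$ and $\iota$ directly (the paper obtains them by applying $(iii)\Rightarrow(i)$ to the obvious fibre partitions, which amounts to the same estimate), and Proposition~\ref{prop:EasyKappa} together with Whyte's criterion (Proposition~\ref{prop:QIdistBij}) does the rest; your interior/boundary bookkeeping for $(iii)\Rightarrow(i)$ matches the paper's argument with $A^+$.

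The gap is in $(ii)\Rightarrow(iii)$, which is the substantive implication. You reduce it to decomposing the $(n,m)$-biregular bipartite multigraph $\tilde{H}$ into uniformly bounded blocks containing exactly $m$ vertices of $X$ and $n$ vertices of $Y$, and assert that this follows from ``a Hall-type matching argument applied to bounded neighborhoods''. But that decomposition \emph{is} statement $(iii)$ up to trivial repackaging, so it cannot be left as an assertion. Biregularity only yields the expansion inequalities $|N(S)|\geq (n/m)|S|$ and $|N(T)|\geq (m/n)|T|$; no standard form of Hall's theorem converts these into a partition into $(m,n)$-blocks, and the real difficulty -- cutting possibly infinite components of $\tilde{H}$ while exactly preserving the $m{:}n$ count in every block and keeping diameters bounded -- is exactly what is not addressed (local matchings on bounded neighborhoods leave remainders along the cuts, and redistributing them is a global problem). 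Worse, the blocks cannot in general be obtained by cutting $\tilde{H}$ at all: already for $m=n=2$ a component of $\tilde{H}$ may consist of a single $x$ doubly joined to a single $y$, so forming a $(2,2)$-block forces you to merge distinct components using the ambient metric, i.e.\ you need a Lemma~\ref{lem:Partition}-type statement anyway -- and even the existence of \emph{some} partition of $X$ into bounded pieces of exact size $m$ is not free (the paper proves it via a spanning tree, a careful vertex numbering, and a diagonal limit). The paper's route avoids all of this: it first builds partitions $\mathcal{P}_X,\mathcal{P}_Y$ with pieces of exact sizes $m$ and $n$ by Lemma~\ref{lem:Partition}, independently of $f$, checks that the induced map $p\circ f\circ i:\mathcal{P}_X\to\mathcal{P}_Y$ between the quotient graphs is quasi-one-to-one, and applies Whyte's theorem there to obtain the bijection $\psi$, from which $g$ is read off. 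You should either adopt that strategy or supply a complete proof of your block decomposition.
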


\noindent
The following preliminary lemma will be needed in our proof of the proposition.

\begin{lemma}\label{lem:Partition}
For every locally finite connected graph $X$ and every integer $k\geq 1$, there exists a partition $\mathcal{P}_X$ of $X$ whose pieces have constant size $k$ and diameter at most $2(k-1)$.
\end{lemma}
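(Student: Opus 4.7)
The plan is to pass to a spanning tree of $X$ and build the partition by a greedy peeling procedure that groups unassigned vertices into balls of small radius, maintaining a cardinality invariant via the ray structure of infinite locally finite trees.

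Since $X$ is locally finite and connected, I first fix a spanning tree $T\subseteq X$ and work with the $T$-metric on $V(T)=V(X)$; since $d_X\le d_T$, it is enough to construct a partition of $V(T)$ with $T$-diameter at most $2(k-1)$ on each piece. I assume $X$ (hence $T$) is infinite, as otherwise the statement requires the additional hypothesis $k\mid|X|$. Enumerate $V(T)=\{u_1,u_2,\ldots\}$ and inductively define pieces $P_1,P_2,\ldots$: at step $n$, let $u$ be the smallest-indexed vertex still unassigned and pick $P_n$ to be a set of $k$ unassigned vertices contained in some ball $B_T(c,k-1)$ and containing $u$. This automatically gives $T$-diameter at most $2(k-1)$ for $P_n$, because any two vertices of $B_T(c,k-1)$ lie within $T$-distance $2(k-1)$ of each other.

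To ensure that enough unassigned vertices are always available nearby, I maintain the loop invariant that the unassigned vertices form a subforest of $T$ whose \emph{finite} components have cardinality divisible by $k$. Under this invariant, any finite unassigned component can be partitioned into size-$k$ connected subtrees by a straightforward bottom-up peeling from the leaves (each leaf-path of $k$ vertices constitutes a piece and its removal leaves a smaller tree whose size remains a multiple of $k$), while any infinite component $C$ will supply, for any specified $u\in C$, a set $P\subseteq C$ of size $k$ with $T$-diameter at most $2(k-1)$ containing $u$, whose removal from $C$ preserves the invariant.

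The main technical obstacle is this last point: given $u$ inside an infinite, locally finite, connected subtree $C$ of $T$, one must exhibit a set $P\subseteq C$ of the stated form such that each component of $C\setminus P$ is either infinite or has size divisible by $k$. I plan to establish this by invoking K\"onig's lemma to fix a ray in $C$ starting near $u$, then walking from $u$ toward this ray and greedily absorbing into $P$ the entire finite side-branches encountered, fine-tuning the stopping point so that every side-branch not absorbed has size a multiple of $k$. Verifying that this combinatorial adjustment can always be carried out is the genuine content of the proof; once it is settled, iterating the greedy step yields the desired partition because the least unassigned index strictly increases at every iteration, so every vertex is eventually assigned.
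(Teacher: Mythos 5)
Your proposal leaves its central step unproved --- you say yourself that ``verifying that this combinatorial adjustment can always be carried out is the genuine content of the proof'' --- and unfortunately that step is false as stated. You require: given the least-indexed unassigned vertex $u$ lying in an infinite component $C$ of the unassigned forest, a set $P\subseteq C$ with $|P|=k$, $u\in P$, $\mathrm{diam}_T(P)\leq 2(k-1)$, whose removal leaves every finite component of $C\setminus P$ with size divisible by $k$. Take $k=2$ and let $C=T=X$ be the tree in which $u$ has three neighbours $v_1,v_2,v_3$, where $v_1,v_2$ are leaves and $v_3$ starts an infinite ray $v_3,w_1,w_2,\dots$. Every two-element set containing $u$ of diameter at most $2$ is one of $\{u,v_1\}$, $\{u,v_2\}$, $\{u,v_3\}$, $\{u,w_1\}$, and each of these orphans at least one singleton component ($\{v_2\}$, $\{v_1\}$, both, or all three of $\{v_1\},\{v_2\},\{v_3\}$ respectively). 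So the invariant cannot be maintained and the greedy procedure jams at the first step. The lemma is nevertheless true here --- take $\{u,v_1\},\{v_2,v_3\},\{w_1,w_2\},\dots$ --- but the second piece $\{v_2,v_3\}$ is \emph{not} contained in a single component of the unassigned forest after $\{u,v_1\}$ is removed, which is exactly what your scheme forbids. The two structural choices that break the argument are (a) forcing each piece to contain a prescribed vertex, and (b) forcing each piece to live inside one component of what remains: a vertex can carry several finite side-branches of size $\not\equiv 0 \pmod k$, and a single piece of size $k$ through that vertex cannot absorb or repair all of them. (A secondary inaccuracy: a finite tree of size divisible by $k$ need not split into \emph{connected} pieces of size $k$ --- the star $K_{1,3}$ has no perfect matching --- though it does split into pieces of diameter $\leq 2(k-1)$.)

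The paper avoids both pitfalls by reversing the direction of the peeling. Its key claim is that from any finite tree of diameter $>k$ one can remove some $k$ vertices of diameter $\leq 2(k-1)$ so that the \emph{complement remains connected}; the removed set is the initial segment of a careful numbering that starts at one end of a diameter-realizing geodesic, so no small components are ever orphaned and no divisibility bookkeeping is needed. One then iterates this on each member of an exhaustion of a spanning tree by finite subtrees and extracts the partition of the infinite graph by a diagonal (compactness) argument, rather than by a one-pass greedy construction. If you want to salvage your approach, you would have to either give up prescribing which vertex the next piece contains, or allow pieces to straddle several components of the unassigned forest --- at which point you are essentially forced back to something like the paper's connectivity-preserving peeling.
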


\begin{proof}
Our lemma will be essentially a consequence of the following observation:

\begin{claim}\label{claim:Subtree}
Let $k \geq 1$ be an integer and $T$ a finite tree of diameter $>k$. There exists a subset $S \subset T$ of $k$ vertices that has diameter $\leq 2(k-1)$ and such that $T \backslash S$ is connected.
\end{claim}
\begin{figure}
\begin{center}
\includegraphics[width=0.5\linewidth]{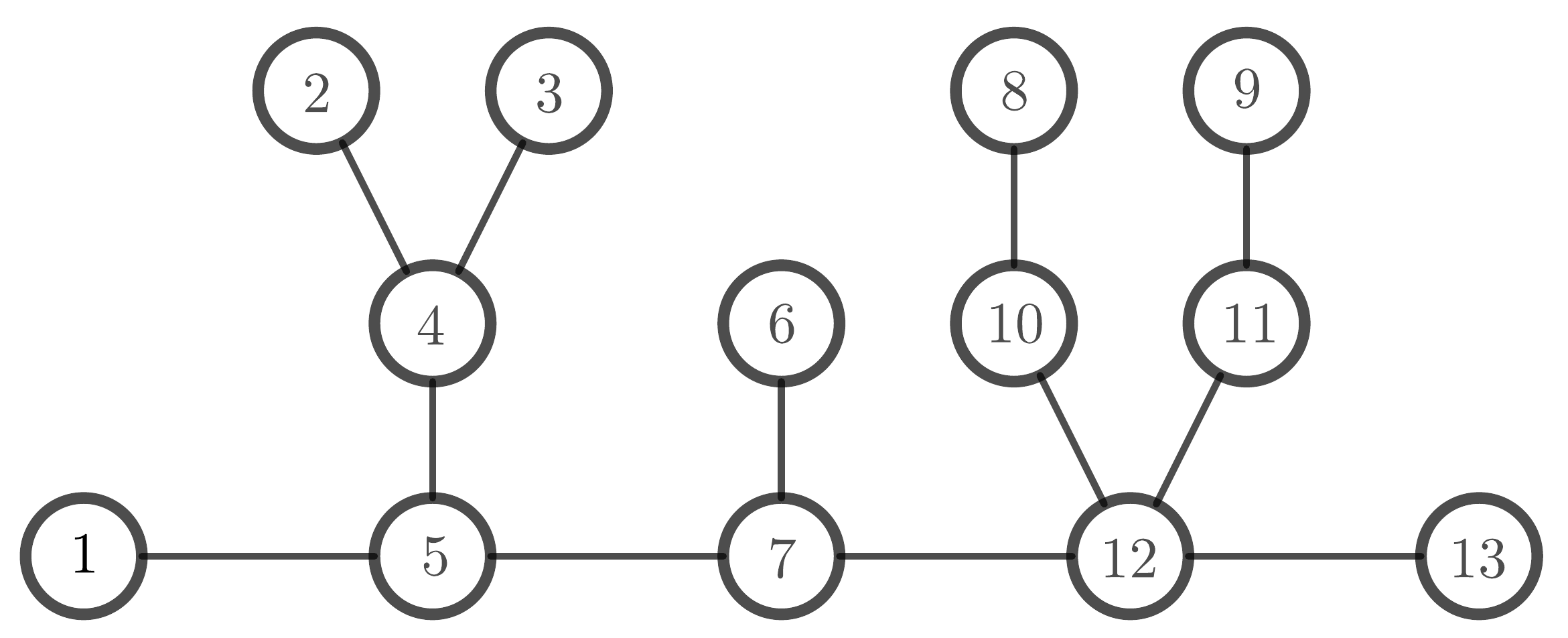}
\caption{Numbering a finite tree.}
\label{Arbre}
\end{center}
\end{figure}

\noindent
Fix two vertices in $T$ at distance $\mathrm{diam}(T)$, and let $u_0,\ldots, u_d$ denote the unique geodesic between them. For every $1 \leq i \leq d-1$, let $T_i$ denote the connected component of $T \backslash \{u_0, \ldots, u_{i-1},u_{i+1},\ldots, u_d\}$ that contains $u_i$; we denote by $d_i$ the maximal distance from $u_i$ to a vertex in $T_i$. Now, we number the vertices of $T$ in the following way:
\begin{itemize}
	\item number $u_0$ as the first vertex;
	\item number the vertices in $T_1$ at distance $d_1$ from $u_1$ in an arbitrary number, do the same thing with the vertices at distance $d_1-1$ from $u_1$, and so on, until numbering $u_1$;
	\item iterate the numbering with $T_2$, then $T_3$, and so on until $T_{d-1}$.
	\item Finally, number $u_d$ as the last vertex of $T$.
\end{itemize}
See for instance Figure \ref{Arbre}. Let $S$ denote the first $k$ vertices with respect to this numbering. By construction, $T \backslash S$ is connected and $S$ is contained in $\{u_0\} \cup T_1 \cup \cdots \cup T_{k-1}$. Observe that
$$\begin{array}{lcl} d(v,u_{k-1}) & = & d(v,u_d)-d(u_{k-1},u_d) \leq \mathrm{diam}(T) - d(u_{k-1},u_d) \\ \\ & \leq & d(u_0,u_d)-d(u_{k-1},u_d)=d(u_0,u_{k-1})=k-1 \end{array}$$
for every vertex $v \in \{u_0\} \cup T_1 \cup \cdots \cup T_{k-1}$. Therefore, $\{u_0\} \cup T_1 \cup \cdots \cup T_{k-1}$, and a fortiori $S$, has diameter at most $2(k-1)$. This concludes the proof of our claim.

\medskip \noindent
Now, let us go back to our graph $X$. Fix a spanning tree $T \subset X$ and let $T(1),T(2), \ldots$ be an increasing sequence of subtrees covering $T$. For every $i \geq 1$, applying Claim~\ref{claim:Subtree} iteratively provides a partition $\mathcal{P}_{T(i)}$ of $T(i)$ whose pieces have constant size $k$ and diameter at most $2(k-1)$ (in $T(i)$, and a fortiori in $X$). Since $X$ is locally finite, a diagonal argument shows that $(\mathcal{P}_{T(i)})$ subconverges to some partition $\mathcal{P}_X$, i.e. $(\mathcal{P}_{T(i)})$ admits a subsequence of partitions that are eventually constant (to $\mathcal{P}_X$) on each finite set. This is the partition we are looking for.
\end{proof}

\begin{proof}[Proof of Proposition \ref{prop:kappa}.]
We begin by proving the implication $(iii) \Rightarrow (i)$. Let $A \subset Y$ be a finite set of vertices, and let $A^+$ denote the union of all the pieces in $\mathcal{P}_Y$ that contain at least one point of $A$. If $A^+$ is a union of $k$ pieces of $\mathcal{P}_Y$, then $f^{-1}(A^+)$ is a union of $k$ pieces of $\mathcal{P}_X$, hence $|f^{-1}(A^+)|/m = k = |A^+|/n$. So we have
$$\begin{array}{lcl} \displaystyle \left| \frac{m}{n} |A|-  |f^{-1}(A)| \right| & \leq & \displaystyle \frac{m}{n} \left| |A^+|- |A| \right| + \left| \frac{m}{n} |A^+| -  |f^{-1}(A^+) | \right| + \left| |f^{-1}(A^+)|-|f^{-1}(A)| \right| \\ \\ & \leq & \displaystyle \frac{m}{n} \left| A^+\backslash A \right| +  \left| f^{-1}(A^+ \backslash A) \right| \leq \left( \frac{m}{n} + m \right) |A^+ \backslash A|  \end{array}$$ 
where the last inequality is justified by the fact that, for every finite set of vertices $B \subset Y$, we have $|f^{-1}(B) | \leq m |B|$. Indeed, $B$ is contained in at most $|B|$ pieces of $\mathcal{P}_Y$, so $f^{-1}(B)$ must be contained in at most $|B|$ pieces of $\mathcal{P}_X$, hence $|f^{-1}(B)| \leq m |B|$. Because the pieces of $\mathcal{P}_Y$ are uniformly bounded, there exists some $K \geq 0$ such that $A^+ \subset A^{+K}$. We deduce from Fact \ref{fact:EasyOne} that
$$\left| \frac{m}{n} |A|-  |f^{-1}(A)| \right| \leq \frac{m}{n}(n+1) |A^+\backslash A | \leq \frac{m}{n}(n+1) |A^{+K}\backslash A| \leq \frac{m}{n}C(n+1) |\partial A|$$
for some constant $C \geq 0$ that does not depend on $A$. Thus, we have proved that $f$ is quasi-$(m/n)$-to-one, as desired.

\medskip \noindent
The implication $(i) \Rightarrow (ii)$ can be deduced from $(iii) \Rightarrow (i)$. Indeed, we deduce from the latter implication that $\iota$ and $\pi$ are respectively quasi-$(1/m)$-to-one and quasi-$n$-to-one. Therefore, Proposition \ref{prop:EasyKappa}(ii) implies that $\iota \circ f \circ \pi$ is quasi-one-to-one, and $(iii)$ follows from Proposition \ref{prop:QIdistBij}. 

\medskip \noindent
Now, we want to prove the implication $(ii) \Rightarrow (i)$. Given a finite subset $A \subset Y$, we set $A^+:= A \times \mathbb{Z}/m \mathbb{Z} \subset Y \times \mathbb{Z}/m \mathbb{Z}$. Observe that
$$|A^+|= m |A| \text{ and } \left| f^{-1}(A) \right| = \frac{1}{n} \left| (\iota \circ f \circ \pi)^{-1}(A^+) \right|,$$
so we have
$$\left| \frac{m}{n} |A| - |f^{-1}(A)| \right| = \frac{1}{n} \left| |A^+| - |(\iota \circ f \circ \pi)^{-1}(A^+) \right| \leq \frac{Q}{n} \left| \partial_{Y \times \mathbb{Z}/m \mathbb{Z}} A^+ \right|,$$
where $Q \geq 0$ is a constant that does not depend on $A$, because it follows from the fact that $\iota \circ f \circ \pi$ is at bounded distance from a bijection that it is quasi-one-to-one (according to Proposition \ref{prop:EasyKappa}(i)). By noticing that $\partial_{Y \times \mathbb{Z}/m\mathbb{Z}} A^+ = \partial A \times \mathbb{Z}/m \mathbb{Z}$, we deduce that
$$\left| \frac{m}{n} |A| - |f^{-1}(A)| \right| \leq \frac{Qm}{n} | \partial A|,$$
concluding the proof that $f$ is quasi-$(m/n)$-to-one.

\medskip \noindent
Finally, we turn to the implication $(i) \Rightarrow (iii)$. According to Lemma \ref{lem:Partition}, there exists a partition $\mathcal{P}_X$ (resp. $\mathcal{P}_Y$) of $X$ (resp. of $Y$) with uniformly bounded pieces of constant size $m$ (resp. $n$). Let $D$ denote a uniform upper bound on the diameters of the pieces in $\mathcal{P}_X$, $\mathcal{P}_Y$. We endow $\mathcal{P}_X$ (resp. $\mathcal{P}_Y$) with a graph structure by connecting two pieces with an edge whenever they contain two adjacent vertices. For every $P \in \mathcal{P}_X$ (resp. $Q \in \mathcal{P}_Y$), we fix a basepoint $x_P \in P$ (resp. $y_Q \in Q$). By applying the implication $(iii) \Rightarrow (i)$, we show that the quasi-isometries
$$i : \left\{ \begin{array}{ccc} \mathcal{P}_X & \to & X \\ P & \mapsto & x_P \end{array} \right. \text{ and } p : \left\{ \begin{array}{ccc} Y & \to & \mathcal{P}_Y \\ y & \mapsto & \text{piece containing $y$} \end{array} \right.$$
are respectively quasi-$(1/m)$-to-one and quasi-$n$-to-one. It follows from Proposition~\ref{prop:EasyKappa}(ii) that $p \circ f \circ i$ is quasi-one-to-one, so, according to Proposition~\ref{prop:QIdistBij}, it is at finite distance, say $L$, from a bijection $\psi : \mathcal{P}_X \to \mathcal{P}_Y$. Define a map $g : X \to Y$ by
$$g : x \mapsto \text{$y_{\psi(P)}$ where $P \in \mathcal{P}_X$ contains $x$}.$$
Fix a point $x \in X$, and let $P \in \mathcal{P}_X$ denote the piece containing it. Given constants $C,K \geq 0$ such that $f$ is a $(C,K)$-quasi-isometry, we have
$$\begin{array}{lcl} d(f(x),g(x)) & \leq & d(f(x),f(x_P)) + d(f(x_P),g(x)) \leq C d(x,x_P) +K+ d(f(x_P),g(x)) \\ \\ & \leq & CD+K+ d(f(x_P),g(x)). \end{array}$$
Notice that $f(x_P)$ belongs to $p \circ f \circ i(P)$ and $g(x)$ to $\psi(P)$. Consequently, these two pieces must be within $L$ in $\mathcal{P}_Y$, which implies that $d(f(x_P),g(x)) \leq (L+1)D$. Thus, we have proved that $f$ and $g$ are at distance at most $CD+K+(L+1)D$, concluding the proof. 
\end{proof}

\section{Examples of measure-scaling quasi-isometries}

Our main source of examples of standard metric measure spaces are compactly generated locally compact groups. Clearly the notion of measure-scaling quasi-isometry does not depend on a specific choice of word metric. Indeed, if $S_1$ and $S_2$ are two generating compact subsets, the identity map $(G,d_{S_1},\mu)\to(G,d_{S_2},\mu)$ is a bijective quasi-isometry (actually even bi-Lipschitz). In particular it is quasi-one-to-one.
We are going to prove a much more general statement that can be seen as a refinement of the fundamental theorem of geometric group theory. 

\begin{thm}\label{thm:FundThm}
Let $(X,d,\mu)$ be a standard metric measure space. Let $G$ be a locally compact group acting continuously by measure-preserving isometries on $X$. We assume that the action is proper and cocompact. Then $G$ is compactly generated and the orbit maps are measure-scaling quasi-isometries. 
\end{thm}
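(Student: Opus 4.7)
The plan is first to dispatch the quasi-isometry statement by invoking the locally compact version of the Švarc-Milnor lemma, and then to identify the scaling factor $\kappa$ and verify the measure-scaling estimate. For the quasi-isometry part, I would appeal to the standard generalization of the Švarc-Milnor lemma (e.g.~\cite[Proposition~4.C.5]{CornHarpe}): the hypotheses of a continuous, proper, cocompact, isometric action on a $1$-geodesic space of bounded geometry give exactly what is needed to conclude that $G$ is compactly generated and that each orbit map $\phi_{x_0}\colon G\to X$, $g\mapsto g x_0$, is a quasi-isometry (with respect to any word metric on $G$ coming from a compact symmetric generating set, for instance $S=\{g\in G : gK\cap K\neq\emptyset\}$ for a compact $K$ with $GK=X$).

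It remains to prove that $\phi_{x_0}$ is quasi-$\kappa$-to-one for a suitable $\kappa>0$. I would begin by noting that $G$ is necessarily unimodular: the push-forward $(\phi_{x_0})_{\ast}\mu_G$ is a $G$-invariant Radon measure on $X$, and compatibility with the $G$-invariant measure $\mu$ under the cocompact proper action forces the modular character of $G$ to be trivial. Fixing a compact $K\subset X$ with $x_0\in K$ and $GK=X$, a Bourbaki-style selection argument for proper actions produces a Borel subset $D\subset K$ of positive finite $\mu$-measure and bounded diameter such that $\mu$-almost every $G$-orbit meets $D$ in exactly one point (or, in the non-free case, a Borel cross-section for the action modulo the compact stabilizers). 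I then set
\[
\kappa \;:=\; \frac{\mu_G\!\left(\phi_{x_0}^{-1}(D)\right)}{\mu(D)}.
\]
Unimodularity and $G$-invariance of $\mu$ ensure this quantity is independent of the choice of $x_0$ and of $D$.

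For the main estimate on a thick subset $A\subset X$, the strategy is to split $A$ into a \emph{bulk} and a \emph{boundary} part. Let $A_{\mathrm{bulk}}$ be the union of those $G$-translates $gD$ entirely contained in $A$, and $A_\partial := A\setminus A_{\mathrm{bulk}}$. By construction, $A_{\mathrm{bulk}}$ is a disjoint union of $G$-translates of $D$, and by $G$-invariance of $\mu$ together with the definition of $\kappa$ one gets the exact identity $\mu_G(\phi_{x_0}^{-1}(A_{\mathrm{bulk}}))=\kappa\,\mu(A_{\mathrm{bulk}})$. On the other hand, every translate $gD$ contributing to $A_\partial$ lies within distance $\mathrm{diam}(K)$ of $\partial A$, so $A_\partial \subset \partial_R A$ for $R:=\mathrm{diam}(K)+1$, and hence $\mu(A_\partial)\lesssim \mu(\partial_2 A)$ by Fact~\ref{fact:EasyOne}(iv). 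A matching bound $\mu_G(\phi_{x_0}^{-1}(A_\partial))\lesssim \mu(\partial_2 A)$ follows by applying Fact~\ref{fact:EasyOne'}(ii) to the quasi-isometry $\phi_{x_0}$. Combining these controls yields $|\kappa\mu(A)-\mu_G(\phi_{x_0}^{-1}(A))|\leq C\,\mu(\partial_2 A)$, as required.

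The main obstacle is the construction of the Borel cross-section $D$ with the stated properties, especially in the non-free case: this relies on the standard but delicate selector theorem for proper actions of locally compact groups, together with a measure-theoretic quotient to accommodate compact stabilizers. Once this bookkeeping is in place, the rest of the argument is a direct application of the measure-theoretic facts collected in Section~\ref{sec:MMspaces}, and shows in particular that the scaling factor $\kappa$ is intrinsic to the action, being independent of the choice of base-point $x_0$ and of cross-section.
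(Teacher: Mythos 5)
There is a genuine gap, and it sits at the heart of your argument: the object $D$ you want --- a Borel set of \emph{positive finite $\mu$-measure} meeting almost every orbit in \emph{exactly one} point --- does not exist in general, and the quantity $\kappa=\mu_G(\phi_{x_0}^{-1}(D))/\mu(D)$ is not the right scaling factor even when it is defined. If $G$ is non-discrete and acts with non-open stabilizers (e.g.\ $\mathbb{R}^n$ acting on itself), any strict cross-section is $\mu$-null and $\phi_{x_0}^{-1}(D)$ is Haar-null, so your $\kappa$ is $0/0$. Worse, even in the discrete setting your formula is wrong as soon as there are several orbits with stabilizers of different sizes: take $G=\mathbb{Z}\times\mathbb{Z}/2\mathbb{Z}$ acting on $X=\mathbb{Z}\sqcup(\mathbb{Z}\times\mathbb{Z}/2\mathbb{Z})$ (counting measures), with the $\mathbb{Z}/2\mathbb{Z}$ factor acting trivially on the first copy. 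Then $D$ has two points, $\mu(D)=2$, and for $x_0$ in the first copy $\mu_G(\phi_{x_0}^{-1}(D))=|G_{x_0}|=2$, so your formula gives $\kappa=1$; but the orbit map sends a Haar mass $2(2n+1)$ onto a set of measure $3(2n+1)$, so the true scaling factor is $2/3$. The reason is that your numerator only sees the orbit of $x_0$, while $\mu(A)$ aggregates all orbits weighted by the \emph{inverses} of their stabilizer volumes; correspondingly, the translates $gD$ are not pairwise disjoint (in this example $(k,0)D$ and $(k,1)D$ overlap without being equal), so the ``exact identity'' $\mu_G(\phi_{x_0}^{-1}(A_{\mathrm{bulk}}))=\kappa\,\mu(A_{\mathrm{bulk}})$ fails. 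Finally, your claim that the hypotheses force $G$ to be unimodular is false: $\mathrm{Aff}(\mathbb{R})$ acting on itself by left translations preserves the left Haar measure and is proper and cocompact, yet the group is not unimodular (the theorem still holds there, trivially).

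The fix is exactly what the paper does, and it is a genuinely different route from yours: instead of tiling $X$ by translates of a cross-section, one disintegrates $\mu$ over the orbit space, $\int_X f\,d\mu=\int_D\bigl(\int_{G/G_x}f\,d\mu_x\bigr)d\nu(x)$, where $\mu_x$ is the push-forward of Haar measure to $G/G_x$ and $\nu$ is a finite measure on $D$ which is \emph{not} the restriction of $\mu$ (in the discrete case $\nu(\{x\})=\mu_G(G_x)^{-1}$, which is precisely the weighting your formula misses). With this in hand the paper shows that a natural quasi-inverse $\psi:X\to G$ of the orbit map is \emph{exactly} measure $\nu(D)$-to-one, so no bulk/boundary estimate is needed at all, and one concludes by Proposition~\ref{prop:EasyKappa}(iii) that the orbit map is quasi-$\nu(D)^{-1}$-to-one. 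Your boundary estimates (locating $A_\partial$ inside $\partial_R A$ and invoking Facts~\ref{fact:EasyOne} and~\ref{fact:EasyOne'}) are fine as far as they go, but they cannot compensate for an incorrectly identified $\kappa$ and an unproved bulk identity; if you want to keep the bulk/boundary structure you must still route the bulk computation through the disintegration formula, at which point the paper's direct argument via the quasi-inverse is shorter.
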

\begin{remark}
As observed in Remarks \ref{rem:scalingNonAmen} and \ref{rem:geomAmenable} the previous theorem is only meaningful when the group $G$ is amenable and unimodular.
\end{remark}

\begin{proof}
We know from the fundamental theorem of geometric group theory that $G$ is compactly generated and that the orbit maps are quasi-isometries (see \cite[Theorem~4.C.5. and Corollary~4.C.6]{CornHarpe} for instance). We are going to show that they are measure-scaling quasi-isometries, and compute their scaling factor. In what follows, we refer to \cite{ActionsLocalCompact} for basic material about actions of locally compact groups. We fix a Haar measure $\mu$ on $G$. The action being proper, it is smooth, and so admits a fundamental domain $D$: i.e.\ a set that meets every orbit exactly once. Moreover each ergodic component is reduced to a single orbit $G\cdot x$. Hence it is isomorphic to $(G/G_x,\mu_x)$ where  $G_x$, the stabilizer of $x$ is a compact subrgoup of $G$, and where $\mu_x$ is the image of the Haar measure under the projection $G\to G/G_x$.  Since the action is moreover cocompact, $D$ can be taken to be relatively compact (hence bounded as $d$ is proper). Combining these facts, we deduce that there exists a finite measure $\nu$ on $D$ such that for all measurable function $f$ on $X$
\begin{equation}\label{eq:desint}
\int_X f d\sigma= \int_{D}\left(\int_{G/G_x}f(y)d\mu_x\right)d\nu(x). 
\end{equation}
Note that since the action is by isometries, all orbits maps are at bounded distance from one-another: more precisely, for all $x,x'\in D$, and all $g\in G$, $d(gxgx')=d(x,x')\leq \mathrm{Diam}(D)$. We fix $x_0\in D$, and consider the orbit map $\phi(g)=gx_0$. A quasi-inverse $\psi$ can be defined as follows: for all $x\in D$, and all $y\in G\cdot x$, pick $g$ such that $y=gx$. Indeed, we have that $\phi\circ \psi(y)=gx_0$, where $g$ is such that $gx=y$ for some $x\in D$. We deduce that $d(y,\phi\circ \psi(y))=d(gx,gx_0)=d(x,x_0)\leq  \mathrm{Diam}(D)$.
On the other hand $\psi\circ\phi(g)=\psi(gx_0)=g'$ such that $g'x_0=g x_0$. Hence $g'\in gG_{x_0}$. Hence $d(\psi\circ\phi(g),g)=d(g,g')\leq  \mathrm{Diam}(G_{x_0})<\infty $ since by assumption, $G_{x_0}$ is compact.

Note that by definition of the measure on $G/G_x$, each orbit map $G\to G/G_x$ is measure one-to-one. Hence it follows from the disintegration formula (\ref{eq:desint}) that $\psi$ is measure $\nu(D)$-to-one. In particular it is quasi-$\nu(D)$-to-one. By Proposition \ref{prop:EasyKappa}, we deduce that the orbit maps are quasi-$\nu(D)^{-1}$-to-one, proving the theorem.
\end{proof}
We now list a few corollaries of interest.
\begin{cor}
Let $X$ be locally finite graph and $G$ a closed cocompact subgroup of isometries of $X$ equipped with a left Haar measure $\mu$. Then every orbit map is a measure-scaling quasi-isometry, of scaling factor $\kappa=\sum_{x\in D}\mu(G_x)^{-1}$, where $D$ is a (finite) set of representatives of each orbit, and $G_x$ is the stabilizer of $x$ in $G$. 
\end{cor}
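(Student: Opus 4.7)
The plan is to deduce the corollary directly from Theorem~\ref{thm:FundThm} by making the measure $\nu$ appearing in the disintegration formula (\ref{eq:desint}) completely explicit when $X$ is a locally finite graph equipped with its counting measure $\sigma$ on vertices.

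First I would observe the two easy set-up points: since the action is cocompact on the locally finite $X$, a fundamental domain $D$ is bounded, hence finite; and since graph automorphisms permute vertices, $G$ acts by $\sigma$-preserving isometries, so Theorem~\ref{thm:FundThm} applies and tells us that each orbit map $\phi(g) = g x_0$ is a measure-scaling quasi-isometry whose scaling factor is determined by $\nu(D)$ via the theorem and Proposition~\ref{prop:EasyKappa}(iii).

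Next I would compute the weights $\nu(\{x\})$ for $x \in D$ one at a time. Plugging $f = \mathbf{1}_{\{x\}}$ into (\ref{eq:desint}), the left-hand side is $\sigma(\{x\}) = 1$. On the right-hand side, $f$ vanishes on every orbit but $G \cdot x$, so only the point $x \in D$ contributes and the integral collapses to $\mu_x(\{x\}) \cdot \nu(\{x\})$. By construction $\mu_x$ is the pushforward of the Haar measure $\mu$ under the quotient $\pi_x : G \to G/G_x$, and $\pi_x^{-1}(\{x\}) = G_x$, so
\[
\mu_x(\{x\}) = \mu(G_x).
\]
Therefore $\nu(\{x\}) = \mu(G_x)^{-1}$, and summing over the finitely many orbit representatives yields $\nu(D) = \sum_{x \in D} \mu(G_x)^{-1}$.

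Finally, combining this computation with Theorem~\ref{thm:FundThm} (and, if one passes between the orbit map $\phi$ and its quasi-inverse, Proposition~\ref{prop:EasyKappa}(iii)) gives the desired formula for the scaling factor $\kappa$ in terms of the stabilizer measures. There is no real obstacle here: the entire content is the evaluation of the disintegration formula on the characteristic function of a single vertex, the rest being bookkeeping provided by the general theorem.
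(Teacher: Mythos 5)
Your proposal is correct and follows essentially the same route as the paper: the paper simply writes down the discrete form of the disintegration formula and reads off $\nu(\{x\})=\mu(G_x)^{-1}$, which is exactly what your evaluation of (\ref{eq:desint}) on the indicator functions $\mathbf{1}_{\{x\}}$ establishes. (Both you and the paper leave implicit the passage between $\nu(D)$ and $\nu(D)^{-1}$ --- the proof of Theorem~\ref{thm:FundThm} shows the orbit map itself is quasi-$\nu(D)^{-1}$-to-one --- and your parenthetical appeal to Proposition~\ref{prop:EasyKappa}(iii) is the right place to settle which of the two the stated $\kappa$ refers to.)
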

\begin{proof}
The disintegration formula (\ref{eq:desint})  takes the following discrete form in this case:
\[\sum_{v\in X} f(v)= \sum_{x\in D}\mu(G_x)^{-1}\sum_{y\in G/Gx}f(y)\]
In other words $\nu(x)=\mu(G_x)^{-1}$, so $\nu(D)=\sum_{x\in D}\mu(G_x)^{-1}$.
\end{proof}
\begin{cor}
Assume that $G$ is  the fundamental group of a compact Riemannian manifold $M$. Then the inclusion of $G$ in the universal cover $\tilde{M}$ of $M$, equipped with its Riemannian measure is a measure-scaling quasi-isometry of scaling factor the inverse of the volume of $M$. 
\end{cor}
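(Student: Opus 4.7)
The plan is to deduce this directly from Theorem~\ref{thm:FundThm} applied to the action of $G=\pi_1(M)$ on the universal cover $\tilde M$ by deck transformations. First I would verify that $(\tilde M, d, \mathrm{vol})$, where $d$ is the Riemannian length distance and $\mathrm{vol}$ the Riemannian volume, is a standard metric measure space: compactness of $M$ yields bounded geometry on $\tilde M$ (balls of fixed radius in $\tilde M$ all have uniformly bounded volume from above and below, since $\tilde M$ is locally isometric to $M$), and the Riemannian length metric is automatically $1$-geodesic (even geodesic).

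Next I would check the hypotheses on the $G$-action. Deck transformations are Riemannian isometries, hence preserve the Riemannian measure; the action is continuous; it is free and properly discontinuous, hence proper; and the quotient $G\backslash \tilde M = M$ is compact, so the action is cocompact. Thus Theorem~\ref{thm:FundThm} applies and the orbit map $\phi : G \to \tilde M$, $g \mapsto g\cdot x_0$, is a measure-scaling quasi-isometry. Identifying $G$ with its orbit $G\cdot x_0 \subset \tilde M$ (equipped with the counting measure) gives the ``inclusion of $G$ in $\tilde M$''.

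It remains to identify the scaling factor. Since $G$ is discrete, the Haar measure on $G$ is the counting measure and every stabilizer $G_x$ is trivial (the action is free), so the measure $\mu_x$ on each orbit $G\cdot x \cong G/G_x = G$ is again the counting measure. The disintegration formula~(\ref{eq:desint}) from the proof of Theorem~\ref{thm:FundThm} then reads
\[
\int_{\tilde M} f\, d\mathrm{vol} = \int_D \Bigl( \sum_{y \in G\cdot x} f(y) \Bigr)\, d\nu(x),
\]
where $D$ is a (relatively compact) fundamental domain for the action; unwinding this against a Riemannian integration over $D$ shows that $\nu$ coincides with $\mathrm{vol}|_D$. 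Since $D$ projects bijectively (up to a measure-zero set) onto $M$, we obtain $\nu(D) = \mathrm{vol}(D) = \mathrm{vol}(M)$. Theorem~\ref{thm:FundThm} then yields that $\phi$ is quasi-$\nu(D)^{-1}$-to-one, i.e.\ the scaling factor is $\mathrm{vol}(M)^{-1}$, as claimed.

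There is no genuine obstacle here beyond bookkeeping: the only mildly delicate point is the identification $\nu(D)=\mathrm{vol}(M)$, which comes from the fact that for a free discrete action the ergodic decomposition in (\ref{eq:desint}) is simply the fibration $\tilde M \to M$ with fiber $G$ equipped with counting measure, so $\nu$ is the pushforward of $\mathrm{vol}$ to $M$ restricted to the fundamental domain.
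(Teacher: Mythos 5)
Your proposal is correct and follows essentially the same route as the paper: the paper's proof is a one-line application of Theorem~\ref{thm:FundThm}, noting that the volume of $M$ equals the measure $\nu(D)$ of a fundamental domain for the deck-transformation action. Your additional verifications (standardness of $\tilde M$, properness and cocompactness of the action, triviality of stabilizers, and the unwinding of the disintegration formula) are just the bookkeeping the paper leaves implicit.
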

\begin{proof}
This follows from the fact that the volume of $M$ equals the measure of any fundamental domain $D$ for the action of $G$ on $\tilde{M}$.
\end{proof}
\begin{cor}\label{cor:copsi}
Let $G$ and $H$ be compactly generated locally compact groups equipped with left Haar measures $\mu_G$ and $\mu_H$ and word metrics associated to compact generating subsets. A continuous proper cocompact morphism $\phi:G\to H$ is a measure-scaling quasi-isometry. A scaling factor is given by the positive number $\kappa>0$ such that $\phi\ast \mu_G=\kappa \mu_H$.
\end{cor}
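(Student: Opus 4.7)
The plan is to apply Theorem~\ref{thm:FundThm} with $X = H$, endowed with its chosen compact-generating word metric and left Haar measure $\mu_H$, and with $G$ acting on $H$ by left translation through $\phi$, namely $g \cdot h := \phi(g) h$. First I would verify that this defines a continuous, proper, cocompact, measure-preserving action by isometries. Continuity is immediate from continuity of $\phi$ and of multiplication in $H$. Each $g \in G$ acts as the left translation $L_{\phi(g)}$, which preserves both the left-invariant word metric on $H$ and the left Haar measure $\mu_H$. Properness of the action amounts to compactness of $\{g \in G : \phi(g) K \cap K' \neq \emptyset\} \subset \phi^{-1}(K' K^{-1})$ for compact $K, K' \subset H$, which follows from properness of $\phi$. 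Finally, cocompactness of the action is precisely cocompactness of $\phi$ as a morphism.

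Once these checks are in hand, Theorem~\ref{thm:FundThm} applied at $e_H \in H$ yields that the orbit map $g \mapsto \phi(g) \cdot e_H = \phi(g)$, namely $\phi$ itself, is a measure-scaling quasi-isometry. This proves the first assertion of the corollary.

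It remains to identify the scaling factor with $\kappa$. By the proof of Theorem~\ref{thm:FundThm} the factor equals $\nu(D)^{-1}$, where $D \subset H$ is a fundamental domain for the $G$-action (i.e.\ a transversal to the cosets of $\phi(G)$ in $H$) and $\nu$ is the measure on $D$ appearing in the disintegration formula~(\ref{eq:desint}). In the present situation every stabilizer equals $\ker \phi$, which is compact by properness of $\phi$, and the orbit measure $\mu_x$ on $\phi(G) x$ is, by construction, the pushforward of $\mu_G$ along $g \mapsto \phi(g) x$. Substituting this description into~(\ref{eq:desint}) and applying it to indicator functions of subsets of $H$, one rewrites $\mu_H(A) = \nu(D) \cdot (\phi_\ast \mu_G)(A)$ for a typical $A$ contained in a single orbit, so comparison with the defining relation $\phi_\ast \mu_G = \kappa \mu_H$ forces $\kappa \nu(D) = 1$. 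The main obstacle I expect is precisely this last comparison: one needs to carefully match the fiberwise orbital description of $\mu_H$ furnished by the disintegration with the global pushforward relation, tracking the normalization of Haar measure on $\ker \phi$ along the way.
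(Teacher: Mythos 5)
Your proof takes exactly the same route as the paper: the paper's entire argument for this corollary is the one-line observation that the action $g\cdot h=\phi(g)h$ of $G$ on $H$ satisfies the hypotheses of Theorem~\ref{thm:FundThm}, so that the orbit map at $e_H$, which is $\phi$ itself, is a measure-scaling quasi-isometry. Your explicit verifications of properness (via $\phi^{-1}(K'K^{-1})$), cocompactness and measure-preservation, and your subsequent matching of the scaling factor $\nu(D)^{-1}$ from the theorem's proof against the relation $\phi_\ast\mu_G=\kappa\mu_H$, are correct and simply fill in details the paper leaves implicit.
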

\begin{proof}
The group $H$ equipped with a left-invariant word metric associated to a compact generating subset and a left Haar measures, and the $G$-action defined by $g\cdot h=\phi(g)h$ satisfy the conditions of the theorem. 
\end{proof}

Recall that commability is the equivalence relation among locally compact groups generated by the existence of a proper continuous morphism with cocompact image \cite{Cornulier_comma}. We immediately deduce the following from Corollary \ref{cor:copsi}.
\begin{cor}\label{cor:commable}
If two compactly generated locally compact groups are commable, then they are measure-scaling quasi-isometric.
\end{cor}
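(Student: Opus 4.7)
The plan is to observe that being measure-scaling quasi-isometric is itself an equivalence relation on compactly generated locally compact groups equipped with word metrics and left Haar measures, and that this equivalence relation contains the generating relation of commability. The conclusion will then follow by transitivity.

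First, I would check the three axioms. Reflexivity is clear since the identity is trivially a quasi-$1$-to-one quasi-isometry. Symmetry follows from Proposition~\ref{prop:EasyKappa}(iii): if $\phi : G \to H$ is a measure-scaling quasi-isometry of scaling factor $\kappa$, then any quasi-inverse $\bar\phi : H \to G$ is a measure-scaling quasi-isometry of scaling factor $1/\kappa$. Transitivity follows from Proposition~\ref{prop:EasyKappa}(ii): the composition of a measure-scaling quasi-isometry of scaling factor $\kappa_1$ with one of scaling factor $\kappa_2$ is a measure-scaling quasi-isometry of scaling factor $\kappa_1\kappa_2$. Thus the relation ``$G$ and $H$ are measure-scaling quasi-isometric'' is an equivalence relation on the class of compactly generated locally compact groups (endowed with the implicit choice of a word metric and a left Haar measure).

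Next, by Corollary~\ref{cor:copsi}, whenever there exists a continuous proper morphism $\phi:G\to H$ with cocompact image, the map $\phi$ is a measure-scaling quasi-isometry. Consequently, the relation of being measure-scaling quasi-isometric contains the generating relation used to define commability. Since this generating relation generates commability by reflexive, symmetric and transitive closure, and since the measure-scaling quasi-isometric relation is already an equivalence relation, it must contain commability. This proves the corollary.

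There is no real obstacle here: the work has already been done in Corollary~\ref{cor:copsi} (handling one step of the commability chain) and in Proposition~\ref{prop:EasyKappa} (ensuring the class of measure-scaling quasi-isometries is closed under composition and quasi-inversion). The corollary is essentially a formal consequence of these two statements.
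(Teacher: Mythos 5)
Your proposal is correct and is exactly the deduction the paper leaves implicit: the paper states the corollary as an immediate consequence of Corollary~\ref{cor:copsi}, and your argument simply spells out why, using Proposition~\ref{prop:EasyKappa}(ii) and (iii) to verify that being measure-scaling quasi-isometric is an equivalence relation containing the generating relation of commability. No difference in approach and no gap.
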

\begin{cor}\label{cor:LatticesScaing}
Assume $\Gamma_1$ and $\Gamma_2$ are two finitely generated uniform lattices in a locally compact group $G$. Then there is a measure-scaling quasi-isomertry $\Gamma_1\to \Gamma_2$ with scaling factor $\kappa=\mu(G/\Gamma_2)/\mu(G/\Gamma_1)$.
\end{cor}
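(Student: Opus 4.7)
The plan is to combine Corollary~\ref{cor:copsi} applied to the two inclusions $\iota_i:\Gamma_i\hookrightarrow G$ with the composition laws from Proposition~\ref{prop:EasyKappa}, taking a quasi-inverse of $\iota_2$ and composing it with $\iota_1$ to get a measure-scaling quasi-isometry $\Gamma_1\to\Gamma_2$. The scaling factor then falls out automatically from the multiplicativity and inversion rules of Proposition~\ref{prop:EasyKappa}.

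More concretely, I would first observe that each inclusion $\iota_i:\Gamma_i\hookrightarrow G$ is a continuous proper cocompact morphism. Continuity is trivial, properness follows from the fact that a uniform lattice is closed and discrete (so preimages of compacta are finite), and cocompactness is the defining property of uniform lattices. Hence Corollary~\ref{cor:copsi} applies and shows that $\iota_i$ is a measure-scaling quasi-isometry, with scaling factor $\kappa_i$ determined by the proof of Theorem~\ref{thm:FundThm} applied to the $\Gamma_i$-action on $G$ by left translation: since $\Gamma_i$ is discrete, all point stabilizers are trivial and the disintegration formula~(\ref{eq:desint}) becomes Weil's integration formula, in which the measure $\nu$ on a fundamental domain $D_i$ is simply the restriction of $\mu$ to $D_i$. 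In particular $\nu(D_i)=\mu(G/\Gamma_i)$, so $\iota_i$ is quasi-$\mu(G/\Gamma_i)^{-1}$-to-one. Next, picking a quasi-inverse $\bar{\iota}_2:G\to \Gamma_2$ of $\iota_2$, Proposition~\ref{prop:EasyKappa}(iii) gives that $\bar{\iota}_2$ is quasi-$\mu(G/\Gamma_2)$-to-one. Finally, Proposition~\ref{prop:EasyKappa}(ii) applied to the composition $\bar{\iota}_2\circ\iota_1:\Gamma_1\to\Gamma_2$ yields a quasi-isometry that is quasi-$\kappa$-to-one with
\[
\kappa \;=\; \mu(G/\Gamma_2)\cdot \mu(G/\Gamma_1)^{-1},
\]
which is exactly the claimed scaling factor.

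The only non-trivial step is the identification of the scaling factor $\kappa_i$ of $\iota_i$ with $\mu(G/\Gamma_i)^{-1}$: this amounts to checking, in the proof of Theorem~\ref{thm:FundThm} specialized to a discrete group acting by left translation on an ambient locally compact group with the Haar measure, that the auxiliary measure $\nu$ on the fundamental domain $D_i$ coincides with $\mu|_{D_i}$, so that $\nu(D_i)$ is indeed the covolume. Everything else is formal manipulation of the composition rules already proved in Proposition~\ref{prop:EasyKappa}. In the amenable unimodular case, uniqueness of the scaling factor (Lemma~\ref{lem:KappaWellDefined}) guarantees that the above value of $\kappa$ is canonical; in the non-amenable or non-unimodular case the conclusion holds trivially for any positive real, and in particular for the prescribed $\kappa$, by Remark~\ref{rem:scalingNonAmen}.
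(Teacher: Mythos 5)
Your argument is correct and follows essentially the same route as the paper: apply Corollary~\ref{cor:copsi} to conclude that each inclusion $\Gamma_i\hookrightarrow G$ is quasi-$\mu(G/\Gamma_i)^{-1}$-to-one, then compose $\iota_1$ with a quasi-inverse of $\iota_2$ and invoke parts (ii) and (iii) of Proposition~\ref{prop:EasyKappa}. You additionally spell out why the scaling factor of the inclusion equals the inverse covolume (trivial stabilizers, so $\nu(D_i)=\mu(D_i)=\mu(G/\Gamma_i)$ in the disintegration formula), a detail the paper leaves implicit in its citation of Corollary~\ref{cor:copsi}.
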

\begin{proof}
Having a finitely generated uniform lattice, $G$ itself is compactly generated and the inclusion maps $\phi_i:\Gamma_i\to G$, $i=1,2$ are quasi-$\mu(G/\Gamma_i)^{-1}$-to-one by Corollary \ref{cor:copsi}. Letting $\bar{\phi_2}$ be a quasi-inverse of $\phi_2$, the corollary follows from Proposition \ref{prop:EasyKappa}.
\end{proof}

Let us mention the following interest special case.
\begin{cor}
Assume $\Gamma_1$ and $\Gamma_2$ are finite index finitely generated subgroups of a group $\Gamma$. Then there is a measure-scaling quasi-isometry $\Gamma_1\to \Gamma_2$ with scaling factor $\kappa=[\Gamma:\Gamma_2]/[\Gamma:\Gamma_1]$.
\end{cor}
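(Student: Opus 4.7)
The plan is to realize this as an immediate specialization of the preceding Corollary~\ref{cor:LatticesScaing} by viewing the ambient group $\Gamma$ as a discrete locally compact group.

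Concretely, I would equip $\Gamma$ with the discrete topology and the counting measure $\mu$, which is a (bi-invariant) Haar measure. A finite-index subgroup of a discrete group is automatically a uniform lattice: it is discrete (as a subset of a discrete group) and it has finite covolume since a set of coset representatives is a finite fundamental domain. Moreover, any finitely generated group carries a word metric making it a compactly (in fact finitely) generated locally compact group, so the hypotheses of Corollary~\ref{cor:LatticesScaing} are satisfied for the two uniform lattices $\Gamma_1,\Gamma_2 \leq \Gamma$.

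It remains to identify the covolumes. Since $\mu$ is counting measure on $\Gamma$ and a fundamental domain for $\Gamma_i$ acting on $\Gamma$ by translation consists of one element per coset, we obtain
\[
\mu(\Gamma/\Gamma_i) \;=\; |\Gamma/\Gamma_i| \;=\; [\Gamma:\Gamma_i], \qquad i=1,2.
\]
Applying Corollary~\ref{cor:LatticesScaing} then yields a measure-scaling quasi-isometry $\Gamma_1\to\Gamma_2$ with scaling factor
\[
\kappa \;=\; \frac{\mu(\Gamma/\Gamma_2)}{\mu(\Gamma/\Gamma_1)} \;=\; \frac{[\Gamma:\Gamma_2]}{[\Gamma:\Gamma_1]},
\]
as required. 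There is no genuine obstacle here: the only thing to check carefully is that the counting-measure covolume coincides with the index, which is immediate.
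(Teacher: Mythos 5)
Your proof is correct and follows exactly the route the paper intends: the corollary is stated as an immediate special case of Corollary~\ref{cor:LatticesScaing}, obtained by viewing $\Gamma$ as a discrete compactly generated locally compact group with counting Haar measure, so that finite-index subgroups are uniform lattices whose covolumes equal their indices. The only hypothesis worth a passing remark is that $\Gamma$ itself is finitely generated, which follows from $\Gamma_1$ being finitely generated of finite index; with that, your argument is complete and matches the paper.
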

\section{The scaling group of an amenable space}

Given a metric space $X$, we recall that 
\[\mathrm{QI}(X):= \{ \text{quasi-isometry } X \to X \} / \text{bounded distance}\]  forms a group called the \emph{quasi-isometry group} of $X$. Its isomorphism class is clearly invariant under quasi-isometry. 

\begin{definition}
Let $X$ be a standard metric measure space. We define the \emph{(measure-)scaling quasi-isometry group} as follows
\[\mathrm{QI_{sc}}(X):= \{ \text{measure-scaling quasi-isometry } X \to X \} / \text{bounded distance}.\]
\end{definition}

\begin{remark}\label{rem:commability}
By Proposition \ref{prop:EasyKappa}, $\mathrm{QI_{sc}}(X)$ is a subgroup of $\mathrm{QI}(X)$ that is invariant under measure-scaling quasi-isometries (a priori not under quasi-isometries). It follows from Corollary \ref{cor:copsi} that $\mathrm{QI_{sc}}(X)$ is a commability invariant. 
Recall that commability is suitable generalization of commensurability in the context of locally compact groups coined by Yves Cornulier: it is the equivalence relation between locally compact groups generated by the existence of a continuous proper cocompact morphism $G\to H$.
\end{remark}

\begin{remark}
Note that when $X$ is non-amenable, $\mathrm{QI_{sc}}(X)=\mathrm{QI}(X)$. 
\end{remark}
\begin{definition}
If $X$ is amenable, then by Lemma \ref{lem:KappaWellDefined} and  Proposition \ref{prop:EasyKappa}, we have that the scale of a measure-scaling quasi-isometry of $X$ defines a morphism \[\mathrm{scale}:\mathrm{QI_{sc}}(X)\to \mathbb{R}_{>0},\] that we shall call the \emph{scaling morphism}.
The image of $\mathrm{scale}$ will be denoted by $\mathrm{Sc(X)}$ and called the \emph{(measure-)scaling group} of $X$. 
\end{definition}
\begin{remark}
Once again by Proposition \ref{prop:EasyKappa}, $\mathrm{Sc(X)}$ is a measure-scaling quasi-isometry invariant of the space $X$.
Like $\mathrm{QI}(X)$, $\mathrm{QI_{sc}}(X)$ is in general a complicated object, so it might be convenient to focus on the much simpler invariant $\mathrm{Sc(X)}$. 
\end{remark}

We now list a few examples of families of amenable groups $G$ for which $\mathrm{Sc}(G)$ can be computed.

\begin{cor}\label{cor:scalinggroupR}
$\mathrm{Sc}(G)=\mathbb{R}_{>0}$ in the following cases:
\begin{itemize}
\item[(i)] Carnot groups and their lattices;
\item[(ii)] $G=\mathrm{SOL}(\mathbb{R})$ or a lattice in $G$;
\item[(iii)] $G$ is a solvable Baumslag-Solitar group $\mathrm{BS}(1,n)$ for some $n\geq 2$.
\end{itemize}
Moreover in the case of (ii) and (iii), every quasi-isometry is a measure-scaling quasi-isometry, i.e.\ $\mathrm{QI}(X)=\mathrm{QI_{sc}}(X)$.
\end{cor}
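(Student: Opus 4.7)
In each of the three cases the plan is to prove $\mathrm{Sc}(G)=\mathbb{R}_{>0}$ by exhibiting a compactly generated locally compact group $\bar{G}$ containing $G$ as a uniform lattice (taking $\bar{G}=G$ when $G$ is itself a connected Lie group), together with a one-parameter family of topological automorphisms of $\bar{G}$ whose effects on the Haar measure cover all of $\mathbb{R}_{>0}$. As recalled just after Proposition \ref{prop:scalingcopsi}, an automorphism of $\bar{G}$ scaling Haar measure by $\lambda$ is a measure-scaling self quasi-isometry of factor $\lambda^{-1}$, so this shows $\mathrm{Sc_{Aut}}(\bar{G})=\mathbb{R}_{>0}$, and Corollary \ref{cor:LatticesScaing} then transports the equality to $G$.

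Concretely, in case \emph{(i)} a Carnot group $N$ comes by construction equipped with its family of Carnot dilations $(\delta_\lambda)_{\lambda>0}$, each an automorphism scaling the Haar measure by $\lambda^Q$ where $Q$ is the homogeneous dimension of $N$. In case \emph{(ii)} the maps $(x,y,t)\mapsto(\lambda x,\lambda y,t)$ are automorphisms of $\mathrm{SOL}(\mathbb{R})=\mathbb{R}^2\rtimes\mathbb{R}$ multiplying the (bi-invariant) Haar measure $dx\,dy\,dt$ by $\lambda^2$. In case \emph{(iii)} the group $\mathrm{BS}(1,n)$ embeds as a uniform lattice in a compactly generated solvable locally compact group $\bar{G}$ of shape $V\rtimes\mathbb{R}$, where $V$ is an appropriate (archimedean times non-archimedean) completion of $\mathbb{Z}[1/n]$ on which $\mathbb{R}$ acts diagonally by $n^t$, and the dilations $(v,t)\mapsto(\lambda v,t)$ are then automorphisms of $\bar{G}$ scaling Haar measure by $\lambda$. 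In each case, letting $\lambda$ range over $\mathbb{R}_{>0}$ exhausts $\mathbb{R}_{>0}$ as a set of scaling factors, concluding the proof of the first part.

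The supplementary assertion $\mathrm{QI}(X)=\mathrm{QI_{sc}}(X)$ in cases \emph{(ii)} and \emph{(iii)} is the main obstacle. The plan is to invoke the quasi-isometric rigidity theorems of Eskin--Fisher--Whyte \cite{EFWII}: every self-quasi-isometry of $\bar{G}$ is at bounded distance from a ``standard'' map respecting the two horocyclic foliations, of the form $(x,y,t)\mapsto(a(x),b(y),t+c)$ up to permutation of the horocyclic factors, with $a,b$ bi-Lipschitz self-maps of $\mathbb{R}$. By Proposition \ref{prop:EasyKappa}(i) it is then enough to verify the quasi-$\kappa$-to-one condition of Definition \ref{def:quasione} for these standard maps. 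This last step is the technical heart: the plan is to test the condition on the natural Folner sequence of exponentially shaped coordinate boxes in $\bar{G}$, exploiting the further structural information extracted from the rigidity theorem --- namely that the leaf maps $a$ and $b$ are at bounded distance from homotheties --- so that the standard map has a well-defined multiplicative effect on Haar measure, the remaining additive discrepancy being absorbed into the boundary term $\nu(\partial_2 A)$ thanks to the exponential isoperimetric profile of $\bar{G}$.
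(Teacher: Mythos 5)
For the computation of $\mathrm{Sc}(G)$ your strategy coincides with the paper's: realize $G$ as a uniform lattice in (or equal to) a group $\bar G$ admitting automorphisms that scale the Haar measure by every $\lambda>0$, then transport via Corollary \ref{cor:LatticesScaing} and the remark following Proposition \ref{prop:scalingcopsi}. The Carnot and $\mathrm{SOL}$ cases are fine. In the $\mathrm{BS}(1,n)$ case your description of $\bar G$ is slightly off: the acting group must be $\mathbb{Z}$, not $\mathbb{R}$ (there is no action of $\mathbb{R}$ by ``$n^t$'' on $\mathbb{Q}_n$), so $\bar G=(\mathbb{R}\times\mathbb{Q}_n)\rtimes\mathbb{Z}$, and the dilation $v\mapsto\lambda v$ only makes sense on the archimedean factor; dilating that factor alone already scales the Haar measure by $\lambda$, which is all you need and is exactly what the paper does (conjugation by the $\mathbb{R}_{>0}$-part of $\mathrm{Aff}(\mathbb{R})$).

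The gap is in the second part. The paper handles $\mathrm{QI}(X)=\mathrm{QI_{sc}}(X)$ by citing the descriptions of the quasi-isometry groups (Farb--Mosher \cite{MR1608595} for $\mathrm{BS}(1,n)$ --- this, not \cite{EFWII}, is the relevant reference there --- and \cite{EFWII} for $\mathrm{SOL}$), whereas you sketch a verification that rests on the claim that the leaf maps $a,b$ produced by rigidity are ``at bounded distance from homotheties''. That is not what these theorems give: they produce boundary maps that are merely bi-Lipschitz (on $\mathbb{R}$, resp.\ on $\mathbb{R}$ and $\mathbb{Q}_n$), and $\mathrm{Bilip}(\mathbb{R})$ modulo bounded perturbation is far larger than the group of homotheties; if your claim were correct, $\mathrm{QI}(\mathrm{SOL})$ would be essentially $\mathbb{R}_{>0}^2\rtimes\mathbb{Z}/2\mathbb{Z}$ rather than $(\mathrm{Bilip}(\mathbb{R})\times\mathrm{Bilip}(\mathbb{R}))\rtimes\mathbb{Z}/2\mathbb{Z}$. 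Without it your F{\o}lner-box computation does not close: testing the standard map $(x,y,t)\mapsto(x,b(y),t)$ on boxes $[0,e^T]\times J\times[0,T]$ with $|J|=e^T$ reduces the quasi-$\kappa$-to-one condition to $\bigl|\,|b^{-1}(J)|/|J|-\kappa\,\bigr|\leq C/T$ uniformly over intervals $J$, and for a general bi-Lipschitz $b$ (for instance one with slope $1$ on $\mathbb{R}_{\geq 0}$ and $1/2$ on $\mathbb{R}_{<0}$, the continuous analogue of the non-measure-scaling map of $\mathbb{Z}$ given in the paper's introduction) the ratio $|b^{-1}(J)|/|J|$ has no limit. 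So the step you yourself identify as the technical heart is precisely where the argument is missing, and it cannot be repaired by the homothety claim; what is needed is an argument, or a precise reference, explaining why every standard map arising from the rigidity theorems is quasi-$\kappa$-to-one, which is where the entire content of this assertion lies.
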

\begin{proof}
 Recall that a Carnot group is a nilpotent simply connected Lie group with a one-parameter group of automorphism that act by homotheties on the abelianization. Clearly for such a group we have $\mathrm{Sc_{Aut}}(G)=\mathbb{R}_{>0}$. 
Recall that $\mathrm{SOL}(\mathbb{R})=\mathbb{R}^2\rtimes_{(t,t^{-1})}\mathbb{R}_{>0}$ is a normal subgroup in 
the direct product of two copies of the affine group $\mathrm{Aff}(\mathbb{R})=\mathbb{R}\rtimes_{t}\mathbb{R}_{>0}$, where each copy of $\mathbb{R}_{>0}$ acts by conjugation on $\mathrm{SOL}(\mathbb{R})$ in a non-meseasure-preserving way. We therefore deduce that $\mathrm{Sc_{Aut}}(G)=\mathbb{R}_{>0}$. 

\noindent The case of $\mathrm{BS}(1,n)$ is similar: $\mathrm{BS}(1,n)$ is a cocompact lattice in the group $(\mathbb{R}\times\mathbb{Q}_n)\rtimes_{(n^k,n^k)}\mathbb{Z}$, which is a normal subgroup in the direct product of $\mathrm{Aff}(\mathbb{R})$ and $\mathbb{Q}_n\rtimes_{n^k} \mathbb{Z}$. The $\mathbb{R}_{>0}$ subgroup of $\mathrm{Aff}(\mathbb{R})$  acts by conjugation on $(\mathbb{R}\times\mathbb{Q}_n)\rtimes_{(n^k,n^k)}\mathbb{Z}$ in non-measure-preserving way, so once again $\mathrm{Sc_{Aut}}(G)=\mathbb{R}_{>0}$. 
The last statement follows from the description of quasi-isometries given respectively in \cite{MR1608595} for Baumslag-Solitar groups and in \cite{EFWII} for $G=\mathrm{SOL}(\mathbb{R})$.
\end{proof}

\begin{remark}
The first class of examples covers a large class of nilpotent groups including abelian groups, the Heisenberg group, and more generally free nilpotent groups of finite step and rank. The example of SOL is very particular, and it is easy to see that any unimodular group of the form $\mathbb{R}^k\rtimes \mathbb{R}^l$ satisfy $\mathrm{Sc_{Aut}}(G)=\mathbb{R}_{>0}$ (we leave the proof to the reader). 
 \end{remark}

It is interesting to note that the scaling group can also be smaller in a number of situations. However the list of such examples is constrained by the fact that proving any restriction on the scaling group requires a deep understanding of the quasi-isometries of the space, an information that is not yet available for a very wide class of amenable groups.
\begin{prop}\label{prop:Sc(G)}
Let $G$  be an amenable finitely presented group, and let $F$ be a finite group of size $n\geq 2$.
\begin{itemize}
\item[(i)] If $G$ is one-ended, then $\mathrm{Sc}(F\wr G)=\{1\}$;
\item[(ii)] if $G$ is two-ended, then \[\mathrm{Sc}(F \wr G)=\{p_1^{n_1}p_2^{n_2}\ldots p_k^{n_k}\mid n_1,\ldots,n_k\in \mathbb{Z}\}\]
where $p_1,\ldots,p_k$ are the primes appearing in the prime decomposition of $n$. 
\end{itemize}
\end{prop}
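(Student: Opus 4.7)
The plan is to separate each statement into an achievability direction (exhibiting quasi-$\kappa$-to-one self-quasi-isometries for each $\kappa$ in the claimed subgroup) and a rigidity direction (ruling out all other $\kappa$), and to reduce both directions to the known quasi-isometric classification of lamplighter groups.

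For part (i), the inclusion $\{1\}\subseteq\mathrm{Sc}(F\wr G)$ holds trivially via the identity. For the reverse, I would take a quasi-$\kappa$-to-one self-QI $f$ of $F\wr G$ and argue that $\kappa=1$. If $\kappa=m/n$ is rational, Proposition \ref{prop:kappa}(ii) produces a QI at bounded distance from a bijection between $(F\wr G)\times\mathbb{Z}/n\mathbb{Z}$ and $(F\wr G)\times\mathbb{Z}/m\mathbb{Z}$; both of these are commensurable to $F\wr G$ itself, so the quasi-isometric classification of lamplighter groups over one-ended finitely presented amenable bases from \cite{QIwreath} forces $m=n$. Handling irrational $\kappa$ requires a further reduction: either arguing that the scaling factor of a self-QI of $F\wr G$ is automatically rational (which again uses the wreath structure), or directly adapting the rigidity statement of \cite{QIwreath} to bound non-integer ratios between QI-pulled-back volumes.

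For part (ii), since $G$ is two-ended it is virtually $\mathbb{Z}$, so by Corollary \ref{cor:LatticesScaing} and Corollary \ref{cor:commable} the computation of $\mathrm{Sc}(F\wr G)$ reduces to that of $\mathrm{Sc}(F'\wr\mathbb{Z})$ for some finite group $F'$ whose order has the same set of prime divisors as $n$. For the lower bound $\supseteq$, I would construct explicit self-QIs inside the Diestel--Leader model $DL(|F'|,|F'|)$: the shift in the $\mathbb{Z}$-direction is quasi-$|F'|$-to-one, and by exploiting the factorization $|F'|=p_1^{b_1}\cdots p_k^{b_k}$ together with the horospherical-product structure of the Diestel--Leader graph, one obtains self-QIs with scaling factor each individual prime $p_i$; combined with Proposition \ref{prop:EasyKappa}(iii) to close under inversion, this produces the full multiplicative subgroup of $\mathbb{Q}_{>0}$ generated by the $p_i$. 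For the upper bound $\subseteq$, I would invoke the Eskin--Fisher--Whyte rigidity \cite{EFWII}: every self-QI of $F'\wr\mathbb{Z}$ is at bounded distance from a height-respecting QI of $DL(|F'|,|F'|)$ coming from affine bijections of the two regular trees, whose dilation factors are necessarily ratios of integer powers of primes dividing $|F'|$, so the induced scaling factor lies in the claimed subgroup.

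The main obstacle I expect is the achievability direction of part (ii): constructing a self-QI of $DL(|F'|,|F'|)$ with scaling factor exactly a given prime $p_i$ requires a tree-substitution that replaces the uniform $|F'|$-regular branching by an iterated $p_j$-regular branching, after which a partial shift scales only by $p_i$. This step depends sensitively on the Diestel--Leader model and on controlling the boundary terms well enough to verify the quasi-$p_i$-to-one property in the sense of Definition \ref{def:quasione}. The rigidity directions, by contrast, should follow more directly from the classification theorems of \cite{QIwreath} and \cite{EFWII} once they are translated into scaling-factor statements via Proposition \ref{prop:kappa} and Proposition \ref{prop:EasyKappa}.
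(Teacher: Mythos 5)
First, a point of reference: the paper does not actually reprove this proposition. Its entire proof is a citation --- (i) is a theorem of \cite{QIwreath}, and (ii) is deduced from the description of the quasi-isometries of $F\wr\mathbb{Z}$ in \cite{EFWII} as exploited in \cite{MR2730576}. So the question is whether your sketch could stand in for those citations, and for part (i) it cannot as written. The step where you conclude $m=n$ from ``the quasi-isometric classification of lamplighter groups'' is a genuine gap: as you yourself observe, $(F\wr G)\times\mathbb{Z}/n\mathbb{Z}$ and $(F\wr G)\times\mathbb{Z}/m\mathbb{Z}$ both contain $F\wr G$ as a finite-index subgroup, hence are quasi-isometric to each other for \emph{all} $m,n$, so no quasi-isometric classification can possibly distinguish them. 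What your reduction via Proposition~\ref{prop:kappa} actually establishes is the equivalence ``$m/n\in\mathrm{Sc}(F\wr G)$ if and only if these two products are bi-Lipschitz equivalent''; ruling out the latter for $m\neq n$ is precisely the bi-Lipschitz (i.e.\ scaling) rigidity statement of \cite{QIwreath} that (i) asserts, so the argument is circular. The irrational case, which you explicitly leave open, is also not a side issue: the result of \cite{QIwreath} controls $\left|\kappa|A|-|f^{-1}(A)|\right|$ for arbitrary $\kappa$ via the structure of quasi-isometries between lamplighters, and cannot be reached through Proposition~\ref{prop:kappa}, which only applies to rational $\kappa$.

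Part (ii) is strategically sound and consistent with what the paper does: reduce to $F'\wr\mathbb{Z}$ by commensurability invariance of $\mathrm{Sc}$ (via Corollary~\ref{cor:LatticesScaing} and Proposition~\ref{prop:EasyKappa}), then appeal to \cite{EFWII}. But the two substantive steps --- constructing a quasi-$p_i$-to-one self-quasi-isometry of $DL(N,N)$ for each prime $p_i\mid N$, and showing that every (necessarily height-respecting, by \cite{EFWII}) self-quasi-isometry has scaling factor in the subgroup generated by $p_1,\dots,p_k$ --- are exactly where the content lies, and you only gesture at them, flagging the first as your ``main obstacle.'' Both are carried out in \cite{MR2730576}: Dymarz's $k$-to-one maps realize the lower bound, and her use of the Eskin--Fisher--Whyte boundary description gives the upper bound. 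A self-contained proof would need to reproduce that analysis, including the verification of the boundary estimate of Definition~\ref{def:quasione} for the tree-substitution map you describe. In short: part (ii) is an acceptable (if incomplete) outline of the intended argument, while part (i) as proposed does not work and must instead invoke the rigidity theorem of \cite{QIwreath} directly.
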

\begin{proof}
(i) is proved in \cite{QIwreath}, while (ii) is a consequence of the description the quasi-isometry group of $F\wr \mathbb{Z}$ given in \cite{EFWII} (see \cite{MR2730576}). 
\end{proof}

\section{Questions}
We start with a natural question on measure-scaling quasi-isometries.
 \begin{question}
Are there amenable groups that are quasi-isometric but not measure-scaling quasi-isometric?
 \end{question}
Recall that commable groups are measure-scaling quasi-isometric (see Corollary \ref{cor:commable}). Hence finding such a pair would in particular mean finding a pair of amenable groups that are quasi-isometric but not commable. As far as we know this is  unknown. 

A way to distinguish groups up to measure-scaling quasi-isometries is to show that they have distinct scaling groups. This yields the following refinement of the previous question.
 \begin{question}
Are there pairs of amenable groups that are quasi-isometric but have distinct scaling groups?
 \end{question}

Recall that $\mathrm{Sc}(G)=\mathbb{R}_{>0}$ for all Carnot group and more generally for all simply connected nilpotent groups admitting a one parameter group of automorphisms that do not preserve the Haar measure of $G$.  This motivates the following question.
 \begin{question}
 Do we have $\mathrm{Sc}(G)=\mathbb{R}_{>0}$ for all locally compact group $G$ with polynomial growth? 
We might be even more ambitious and ask whether this holds for all amenable connected Lie groups?
 \end{question}
 Since a polycyclic group is virtually a uniform lattice in a solvable connected Lie group, a positive answer to the previous question would yield the same conclusion for polycyclic groups.
 
Alternatively  we might expect the following opposite behavior.
 \begin{question}
 Do we have $\mathrm{Sc}(G)=\mathrm{Sc_{Aut}}(G)$ for all simply connected nilpotent Lie group $G$? For all simply connected unimodular solvable Lie groups? For $p$-adic solvable algebraic groups?
 \end{question}
 
Recall that the automorphism group of a simply connected Lie group coincides with the automorphism group of its Lie algebra. Hence it is a real algebraic group. Since the scaling morphism is obviously Zariski  continuous, it follows that $\mathrm{Sc_{Aut}}(G)$ is either reduced to $1$ or all of $\mathbb{R}_{>0}$. 
A similar observation holds for $p$-adic algebraic groups.
Since their automorphism group is also an algebraic $p$-adic group, $\mathrm{Sc_{Aut}}(G)$ is either trivial or generated by a power of $p$. 
Consider for instance the following example, which should be accessible using methods from \cite{EFWII}.
\begin{conj}
The scaling group of a unimodular solvable algebraic group over  $\mathbb{Q}_n$ is a subgroup (possibly trivial) of $\{p_1^{n_1}p_2^{n_2}\ldots p_k^{n^k}\mid n_1,\ldots,n_k\in \mathbb{Z}\}$, where $p_1,\ldots,p_k$ are the primes appearing in the prime decomposition of $n$. 
\end{conj}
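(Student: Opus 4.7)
The plan is to tackle this as the $p$-adic analogue of Corollary~\ref{cor:scalinggroupR} combined with the lamplighter bound in Proposition~\ref{prop:Sc(G)}, by first obtaining the easy inclusion $\mathrm{Sc_{Aut}}(G)\subseteq\{p_1^{n_1}\cdots p_k^{n_k}:n_i\in\mathbb{Z}\}$ and then using a rigidity theorem of EFW-type to promote this to the full group $\mathrm{Sc}(G)$.

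First I would treat the ``automorphic'' upper bound. Writing $\mathbb{Q}_n=\prod_{i=1}^k \mathbb{Q}_{p_i}$ (a finite product of local fields, one for each prime divisor of $n$), a unimodular solvable algebraic group $G$ over $\mathbb{Q}_n$ decomposes as $G=\prod_i G_i$ with $G_i$ a unimodular solvable algebraic group over $\mathbb{Q}_{p_i}$. An algebraic automorphism $\phi$ of $G$ permutes isomorphic factors and on each factor has a Jacobian which is a $\mathbb{Q}_{p_i}$-algebraic character of $\mathrm{Aut}(G_i)$ with values in $\mathbb{Q}_{p_i}^\times$. The scaling factor of $\phi$ on Haar measure is the product of the $p_i$-adic absolute values of these Jacobians, hence lies in $\prod_i p_i^{\mathbb{Z}}$. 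This proves $\mathrm{Sc_{Aut}}(G)$ is contained in the expected subgroup, and the same holds for $\mathrm{Sc_{Aut}}(\bar G)$ where $\bar G$ is any $\mathbb{Q}_n$-algebraic group in which $G$ sits commably (the scaling is controlled by the $p_i$-adic absolute values of algebraic characters on $\bar G$, as discussed just before the conjecture).

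Next comes the main content: upgrading from $\mathrm{Sc_{Aut}}$ to $\mathrm{Sc}$. The strategy is to show that every measure-scaling self quasi-isometry of $G$ is at bounded distance from the orbit map of some element of a canonical locally compact ``quasi-isometric envelope'' $\bar{G}$ of $G$, which is itself a unimodular $\mathbb{Q}_n$-algebraic group. Then Theorem~\ref{thm:FundThm} and Remark~\ref{rem:commability} give $\mathrm{Sc}(G)=\mathrm{Sc}(\bar G)$, and one argues that in the solvable algebraic setting all measure-scaling quasi-isometries of $\bar G$ come from algebraic automorphisms of $\bar G$, so that $\mathrm{Sc}(\bar G)=\mathrm{Sc_{Aut}}(\bar G)$, which by the first step lies in the desired subgroup. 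Concretely, following the EFW program \cite{EFWII}, I would:
\begin{enumerate}
\item build an appropriate horocyclic foliation of $G$ from its Cartan decomposition;
\item apply coarse differentiation to any quasi-isometry $f\colon G\to G$ to show that $f$ essentially preserves this foliation up to permutation of factors;
\item deduce that $f$ induces, on the leaf space and on each leaf, a boundary map with enough regularity ($p$-adic bi-Lipschitz or quasi-symmetric) to be algebraic;
\item reassemble these boundary data into an algebraic automorphism of an algebraic hull $\bar G$.
\end{enumerate}

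The hard part is exactly Step 2--4 of the second paragraph: carrying out coarse differentiation and boundary rigidity in the purely $p$-adic or mixed real/$p$-adic algebraic setting, and then invoking a $p$-adic analogue of the Tukia--Pansu theorem to conclude that the resulting boundary maps are algebraic. Such arguments exist for the real $\mathrm{SOL}$ case and for certain Baumslag--Solitar groups like $\mathrm{BS}(m,n)$ (where mixed real/$p$-adic rigidity does appear), and extending them to arbitrary unimodular solvable algebraic groups over $\mathbb{Q}_n$ is precisely what is missing from the literature; this is why the statement is offered as a conjecture. Once rigidity is in place, the remaining algebraic computation of the image of $\mathrm{Sc_{Aut}}(\bar G)$ in $\mathbb{R}_{>0}$ is routine via the character argument above.
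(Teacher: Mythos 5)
The statement you are asked about is labelled a \emph{conjecture} in the paper: the authors give no proof of it. The only thing they establish is the special case $G=\mathrm{SOL}(\mathbb{Q}_n)$, which they handle not by rigidity for $G$ itself but by observing that $\mathrm{SOL}(\mathbb{Q}_n)$ acts geometrically on the Diestel--Leader graph $\mathrm{DL}(n,n)$, hence is commable to the lamplighter $\mathbb{Z}/n\mathbb{Z}\wr\mathbb{Z}$, so that Corollary \ref{cor:commable} and Proposition \ref{prop:Sc(G)}(ii) give the scaling group exactly. Your first paragraph --- the decomposition $\mathbb{Q}_n\cong\prod_i\mathbb{Q}_{p_i}$, the Jacobian of an algebraic automorphism being an algebraic character with $p_i$-adic absolute value in $p_i^{\mathbb{Z}}$, hence $\mathrm{Sc_{Aut}}(G)\subseteq\{p_1^{n_1}\cdots p_k^{n_k}\}$ --- is correct and is essentially the observation the authors make in the paragraph immediately preceding the conjecture. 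That part is fine, but it only bounds $\mathrm{Sc_{Aut}}$, not $\mathrm{Sc}$.

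The genuine gap is exactly where you place it: Steps 2--4 of your program (coarse differentiation, boundary rigidity, and the assembly of an algebraic hull $\bar G$ with $\mathrm{Sc}(\bar G)=\mathrm{Sc_{Aut}}(\bar G)$) are not proved, and no such theorem currently exists for general unimodular solvable algebraic groups over $\mathbb{Q}_n$; the results of \cite{EFWII} cover $\mathrm{SOL}(\mathbb{R})$ and the Diestel--Leader/lamplighter setting but not this generality. You are candid that this is the missing ingredient, and indeed it is precisely why the authors state the result as a conjecture rather than a proposition. So your submission is a reasonable research outline that coincides with the authors' own suggested route, but it is not a proof, and it cannot be graded as one: the reduction of $\mathrm{Sc}(G)$ to $\mathrm{Sc_{Aut}}(\bar G)$ is asserted, not established. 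If you want to salvage something provable from this circle of ideas, the $\mathrm{SOL}(\mathbb{Q}_n)$ case via commability with $\mathbb{Z}/n\mathbb{Z}\wr\mathbb{Z}$ is the part that actually closes.
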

Note that the scaling group of  $\mathrm{SOL}(\mathbb{Q}_n)$ equals $\{p_1^{n_1}p_2^{n_2}\ldots p_k^{n^k}\mid n_1,\ldots,n_k\in \mathbb{Z}\}$. Indeed, this group acts geometrically on the Diestel-Leader graph $\mathrm{DL}(n,n)$, and therefore is commable to the lamplighter $\mathbb{Z}/n \mathbb{Z}\wr \mathbb{Z}$. So the conclusion follows from  Proposition~\ref{prop:Sc(G)}.

 We end with a wider problem concerning the scaling group.
\begin{question}
What subgroups of $\mathbb{R}_{>0}$ can be realized as scaling groups of finitely generated amenable groups? For instance can it be not contained in $\mathbb{Q}$ and yet not be all of $\mathbb{R}_{>0}$?
\end{question}

\addcontentsline{toc}{section}{References}

\bibliographystyle{alpha}
{\footnotesize\bibliography{ScalingQI}}

\Address

\end{document}